\theoremstyle{plain}
\theoremstyle{definition}
\newtheorem{theorem}{Theorem}[section]
\newtheorem{corollary}[theorem]{Corollary}
\newtheorem{definition}[theorem]{Definition}
\newtheorem{conjecture}[theorem]{Conjecture}
\theoremstyle{remark}
\newtheorem{example}[theorem]{Example}
\renewenvironment{proof}[1][\proofname]{%
	\par\pushQED{\qed}\normalfont%
	\topsep6\p@\@plus6\p@\relax
	\trivlist\item[\hskip\labelsep\bfseries#1\@addpunct{.}]%
	\ignorespaces
}{%
	\qedhere 
}
\newcommand{\<}{\langle}
\renewcommand{\>}{\rangle}
\newcommand{\qbinom}[2]{\genfrac{[}{]}{0pt}{}{#1}{#2}}
\newcommand{\area}{\mathsf{area}}
\newcommand{\bounce}{\mathsf{bounce}}
\newcommand{\dinv}{\mathsf{dinv}}
\newcommand{\pmaj}{\mathsf{pmaj}}
\DeclareMathOperator{\RP}{RP}
\DeclareMathOperator{\PLDP}{PLDP}
\begin{document}

\title{Parallelogram polyominoes,\\ partially labelled Dyck paths,\\ and the Delta conjecture}

\author[M. D'Adderio]{Michele D'Adderio}
\address{Universit\'e Libre de Bruxelles (ULB)\\D\'epartement de Math\'ematique\\ Boulevard du Triomphe, B-1050 Bruxelles\\ Belgium}\email{mdadderi@ulb.ac.be}

\author[A.Iraci]{Alessandro Iraci}
\address{Universit\'a di Pisa and Universit\'e Libre de Bruxelles (ULB)\\Dipartimento di Matematica\\ Largo Bruno Pontecorvo 5, 56127 Pisa\\ Italia}\email{sashairaci@gmail.com}

\keywords{Delta conjecture, Dyck paths, parallelogram polyominoes.}

\begin{abstract}
	We introduce $\mathsf{area}$, $\mathsf{bounce}$ and $\mathsf{dinv}$ statistics on decorated parallelogram polyominoes, and prove that some of their $q,t$-enumerators match $\langle \Delta_{h_m} e_{n+1}, s_{k+1,1^{n-k}} \rangle$, extending in this way the work in (Aval et al. 2014). Also, we provide a bijective connection between decorated parallelogram polyominoes and decorated labelled Dyck paths, which allows us to prove the combinatorial interpretation of the coefficient $\langle \Delta_{e_{m+n-k-1}}'e_{m+n}, h_m h_n \rangle$ predicted by the Delta conjecture in (Haglund et al. 2015). Finally, we define a statistic $\mathsf{pmaj}$ on partially labelled Dyck paths, which provides another conjectural combinatorial interpretation of $\Delta_{h_{\ell}}\Delta_{e_{n-k-1}}'e_n$, cf. (Haglund et al. 2015).
	
	This is an extended abstract of (D'Adderio, Iraci 2017): this forthcoming publication will have proofs and additional details and results.
\end{abstract}

\maketitle


In \cite{ADDHL}, statistics $\area$, $\bounce$ and $\dinv$ have been defined on parallelogram polyominoes, and some of their $q,t$-enumerators have been shown to be $\langle \Delta_{e_{m+n}}e_{m+n},h_m h_n\rangle = \langle \Delta_{h_m}e_{n+1},s_{1^{n+1}}\rangle$. In particular, the first author proposed a combinatorial intepretation of the full symmetric function $\Delta_{h_m}e_{n+1}$ at $q=1$ in terms of labelled parallelogram polyominoes, and asked for a $\dinv$ statistic that could give the more general $\Delta_{h_m}e_{n+1}$ (cf. \cite[Equations (8.1) and (8.14)]{AvalBergeronGarsia}). 

In \cite{HaglundRemmelWilson} the authors state the so called Delta conjecture, which predicts a combinatorial interpretation of the symmetric function $\Delta_{e_k}e_n$. This is a generalization of the \emph{Shuffle conjecture} stated in \cite{HHLRU} and \cite{HaglundMorseZabrocki} and proved in \cite{CarlssonMellit}, which is related to the famous diagonal harmonics discovered by Garsia and Haiman in their work towards a proof of the Schur positivity of Macdonald polynomials (cf. \cite{GarsiaHaimanPNAS,GarsiaHaimanqLagrange,Haimannfactorial,Haimanvanishing}).

Other than the results mentioned in \cite{HaglundRemmelWilson}, other consequences of this conjecture have been proved, in particular in \cite{DadderioVandenwyngaerd,GarsiaHaglundRemmelYoo,RomeroDeltaq1,Zabrocki4catalan}, while the general Delta conjecture remains open.

The delta operator $\Delta_f$ has been defined for any symmetric function $f$ by Bergeron, Garsia, Haiman and Tesler, and in fact in \cite{HaglundRemmelWilson} the authors provide a generalization of their Delta conjecture for the symmetric function $\Delta_{h_{\ell}e_k}e_n$ in terms of partially labelled Dyck paths.

In this work we extend the results in \cite{ADDHL}, by providing a combinatorial interpretation of the more general $\langle\Delta_{h_m}e_{n+1},s_{k+1,1^{n-k}}\rangle$ in terms of decorated parallelogram polyominoes.

Also, we prove the formula for $\langle \Delta_{e_{m+n-k-1}}'e_{m+n},h_mh_{n} \rangle$ predicted by the Delta conjecture in \cite{HaglundRemmelWilson}, by providing a recursion of these polynomials. This, together with the results in \cite{DadderioVandenwyngaerd}, completely solves Problem 8.1 in \cite{HaglundRemmelWilson}.

Surprisingly, these two solutions are intimately related: indeed, with a bijection, we show that these two combinatorial polynomials actually coincide.

In order to prove these results, we prove some symmetric function identities. The proofs of these are based on theorems in \cite{DadderioVandenwyngaerd} and \cite{HaglundqtSchroeder} 

Finally, we introduce a $\pmaj$ statistic on partially labelled Dyck paths, providing another combinatorial interpretation of $\Delta_{h_{\ell}e_k}e_n$. Then, for the special case $k=0$, we describe a bijection with labelled parallelogram polyominoes, which allows us to define both a $\dinv$ and a $\pmaj$ statistic on these objects: this answers the question in \cite[Equation (8.14)]{AvalBergeronGarsia}.

This is an extended abstract of a forthcoming publication \cite{DadderioIraci}, which will have proofs and additional details and results

\section{Decorated parallelogram polyominoes}

A parallelogram polyomino is a pair of lattice paths from $(0,0)$ to $(m,n)$ such that the first path (the red path) lies \textit{strictly} above the second path (the green path), see Figure \ref{fig:polyomino} for an example. 

In \cite{ADDHL}, authors define three statistics $\area$, $\bounce$, and $\dinv$ on parallelogram polyominoes. The pairs $(\area,\bounce)$ and $(\dinv,\area)$ give rise to a $q,t$-analogue of the Narayana numbers. 

In this article we prefer to work with the almost equivalent \textit{reduced parallelogram polyominoes}. For reduced parallelogram polyominoes, we only ask that the red path lies \textit{weakly} above the green one.

\begin{definition}
	A $m \times n$ \textit{reduced parallelogram polyomino} is a pair of $(m+n)$-tuples $(\textbf{r}, \textbf{g})$, with $\textbf{r} = (r_1,\dots,r_{m+n})$, $\textbf{g} = (g_1,\dots,g_{m+n})$ such that
		
	\begin{itemize}
		\item  $r_i, g_i \in \{0,1\}$,
		\item for all $i$, it holds $g_1 + \dots + g_i \leq r_1 + \dots + r_i$,
		\item $g_1 + \dots + g_{m+n} = r_1 + \dots + r_{m+n} = n$.
	\end{itemize}
So $\textbf{r}$ and $\textbf{g}$ are the red and the green paths, encoded as sequences $1$ and $0$ representing north and east unit steps respectively. See Figure~\ref{fig:polyomino} for an example.
\end{definition}
We now define statistics for suitably decorated reduced polyominoes. While area and bounce will naturally extend the corresponding ones given in \cite{ADDHL}, for the dinv we use a ``reversed'' version, as it suits better our purposes.

\subsection{The area word}

Parallelogram polyominoes can be coded using their \textit{area word} (see \cite{ADDHL}). We do the same for reduced polyominoes, using essentially the same bijection in \cite[Section~3]{ADDHL}: to get the area word, one can interlace the two paths to build a Dyck path $\textbf{D} := (1, r_1, 1 - g_1, r_2, 1 - g_2, \dots, 1 - g_{m+n}, 0)$ of length $2m+2n+2$, which we think of as a sequence of northeast and southeast steps (the $1$s and the $0$s respectively) starting from $(0,0)$ and staying weakly above the $x$ axis. Then we label the positive vertical levels with the ordered alphabet $0 < \bar{0} < 1 < \bar{1} < \dots$ and so on. Starting from $(0,0)$, we go through the path, and whenever we read a northeast step (i.e. a $1$ in $\textbf{D}$), we write down the corresponding level, while we write nothing for the southeast steps. 

It follows from \cite[Section~3]{ADDHL} that this is 
a bijective correspondence between $m \times n$ reduced parallelogram polyominoes and Dyck words of length $m+n+1$ in the ordered alphabet $0 < \bar{0} < 1 < \bar{1} < \dots$ starting with $0$, with exactly $m+1$ unbarred letters, and exactly $n$ barred letters. By a \emph{Dyck word} we mean that any letter is less or equal (in the order of the alphabet) then the successor of the letter to its left.

As suggested by Figure~\ref{fig:polyomino}, the area word can be computed in a different way, described in \cite[Section~2]{ADDHL}. The same procedure works for reduced polyominoes, and gives the same result (except for the starting $0$, which is missing in regular parallelogram polyominoes).

\begin{figure}[!ht]
	\begin{center}
		\begin{tikzpicture}[scale=0.5]
		\draw[step=1.0,gray,opacity=0.6,thin] (0,0) grid (12,7);
		\filldraw[yellow, opacity=0.3] (0,0) -- (3,0) -- (3,1) -- (5,1) -- (5,3) -- (7,3) -- (7,4) -- (10,4) -- (10,5) -- (12,5) -- (12,7) -- (8,7) -- (8,5) -- (5,5) -- (5,4) -- (3,4) -- (3,3) -- (0,3) -- cycle;
		
		\draw[black]
		(1,0) -- (0,1)
		(2,0) -- (0,2)
		(3,0) -- (0,3)
		(4,1) -- (2,3)
		(5,1) -- (3,3)
		(6,3) -- (5,4)
		(7,3) -- (5,5)
		(8,4) -- (7,5)
		(9,4) -- (8,5)
		(10,4) -- (8,6)
		(11,5) -- (9,7)
		(12,5) -- (10,7);
		
		\filldraw[fill=black]
		(2.5,1.5) circle (2pt)
		(1.5,2.5) circle (2pt)
		(4.5,2.5) circle (2pt)
		(3.5,3.5) circle (2pt)
		(4.5,3.5) circle (2pt)
		(6.5,4.5) circle (2pt)
		(9.5,5.5) circle (2pt)
		(8.5,6.5) circle (2pt)
		(11.5,6.5) circle (2pt);
		
		\node[below] at (0.5,0) {$1$};
		\node[below] at (1.5,0) {$2$};
		\node[below] at (2.5,0) {$3$};
		\node[below] at (3.5,1) {$2$};
		\node[below] at (4.5,1) {$2$};
		\node[below] at (5.5,3) {$1$};
		\node[below] at (6.5,3) {$2$};
		\node[below] at (7.5,4) {$1$};
		\node[below] at (8.5,4) {$1$};
		\node[below] at (9.5,4) {$2$};
		\node[below] at (10.5,5) {$2$};
		\node[below] at (11.5,5) {$2$};
		
		\node[left] at (0,0.5) {$\bar{0}$};
		\node[left] at (0,1.5) {$\bar{1}$};
		\node[left] at (0,2.5) {$\bar{2}$};
		\node[left] at (3,3.5) {$\bar{2}$};
		\node[left] at (5,4.5) {$\bar{1}$};
		\node[left] at (8,5.5) {$\bar{1}$};
		\node[left] at (8,6.5) {$\bar{2}$};
		
		\draw[green, line width=3pt] (0,0) -- (3,0) -- (3,1) -- (5,1) -- (5,3) -- (7,3) -- (7,4) -- (10,4) -- (10,5) -- (12,5) -- (12,7);
		\draw[red, line width=3pt] (0,0) -- (0,3) -- (3,3) -- (3,4) -- (5,4) -- (5,5) -- (8,5) -- (8,7) -- (12,7);
		\end{tikzpicture}
	\end{center}
	
	\caption{A $12 \times 7$ polyomino with area word $\bar{0} 1 \bar{1} 2 \bar{2} 3 2 2 \bar{2} 1 \bar{1} 2 1 1 \bar{1} 2 \bar{2} 2 2$.}
	\label{fig:polyomino}
	
\end{figure}
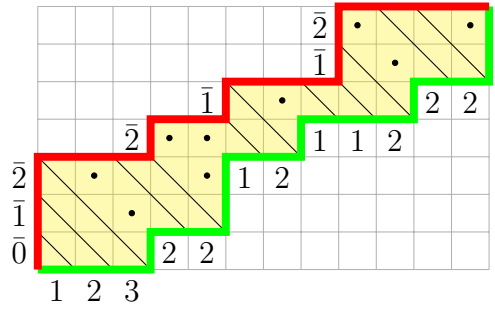

\subsection{The statistics $\area$ and $\underline{\area}$}

On parallelogram polyominoes, one can define a statistic $\area$ as the sum of the letters of the area word. This is actually the area of the polyomino, i.e. the number of unit squares between the two paths.

Let us call \textit{rise} a pair of consecutive letters in the area word of a polyomino such that the former is barred, and the latter is its successor in the alphabet (e.g. $\bar{0}1, \bar{1}2, \bar{2}3, \dots$). Then we can decorate a polyomino by marking the unbarred letters of some of its rises. We define a new statistic $\underline{\area}$ on decorated polyominoes as the sum of the letters of the area word, except the marked ones.

\subsection{The statistics $\bounce$ and $\underline{\bounce}$}

On parallelogram polyominoes we have a second statistic, the $\bounce$. It is computed by drawing the \textit{bounce path}, which is a lattice path going from $(0,0)$ to $(m,n)$, as defined in \cite{ADDHL}. Once again, for reduced polyominoes we need a slight modification of the algorithm. To draw the bounce path, we proceed as follows: starting from $(0,0)$, draw vertical steps until the bounce path hits the beginning of a horizontal red step, then draw horizontal steps until it hits the beginning of a vertical green step, and so on. Continue until you reach $(m,n)$.

Starting from the letter $0$, attach to each step of the bounce path a letter of the usual alphabet, going up a level each time the path changes direction (i.e. when it bounces). Then the $\bounce$ is the sum of the labels attached to the steps of the bounce path. See Figure~\ref{fig:bouncepath} for an example. Notice that there may be no vertical steps labelled with $0$, as both the red and the green path can start horizontally; in that case, we start labelling steps with $\bar{0}$.

Let us call \textit{red valley} a vertical step of the red path preceded by a horizontal step. We can decorate a polyomino by marking some of its valleys, and define a statistic $\underline{\bounce}$ as the sum of the labels attached to the steps of the bounce path, except those lying in the same row as a marked valley (which are unbarred). See Figure~\ref{fig:bouncepath} for an example.

\begin{figure}[!h]
	\begin{center}
		\begin{tikzpicture}[scale=0.6]
			\draw[step=1.0,gray,opacity=0.6,thin] (0,0) grid (12,7);
			\filldraw[yellow, opacity=0.3] (0,0) -- (2,0) -- (2,3) -- (7,3) -- (7,4) -- (10,4) -- (10,5) -- (12,5) -- (12,6) -- (8,6) -- (8,5) -- (5,5) -- (5,4) -- (3,4) -- (3,3) -- (0,3) -- cycle;
			
			\draw[green, line width=3pt] (0,0) -- (2,0) -- (2,3) -- (7,3) -- (7,4) -- (10,4) -- (10,5) -- (12,5) -- (12,7);
			\draw[red, line width=3pt] (0,0) -- (0,3) -- (3,3) -- (3,4) -- (5,4) -- (5,5) -- (8,5) -- (8,6) -- (12,6) -- (12,7);
			
			\filldraw[fill=red]
			(5,4.5) circle (6pt)
			(8,5.5) circle (6pt);
			
			\draw[blue, line width=1.5pt] (0,0) -- (0,3) -- (7,3) -- (7,5) -- (12,5) -- (12,7);
			
			\node[blue, left] at (0,0.5) {$0$};
			\node[blue, left] at (0,1.5) {$0$};
			\node[blue, left] at (0,2.5) {$0$};
			\node[blue, below] at (0.5,3) {$\bar{0}$};
			\node[blue, below] at (1.5,3) {$\bar{0}$};
			\node[blue, below] at (2.5,3) {$\bar{0}$};
			\node[blue, below] at (3.5,3) {$\bar{0}$};
			\node[blue, below] at (4.5,3) {$\bar{0}$};
			\node[blue, below] at (5.5,3) {$\bar{0}$};
			\node[blue, below] at (6.5,3) {$\bar{0}$};
			\node[blue, left] at (7,3.5) {$1$};
			\node[red, left] at (7,4.5) {$1$};
			\node[blue, below] at (7.5,5) {$\bar{1}$};
			\node[blue, below] at (8.5,5) {$\bar{1}$};
			\node[blue, below] at (9.5,5) {$\bar{1}$};
			\node[blue, below] at (10.5,5) {$\bar{1}$};
			\node[blue, below] at (11.5,5) {$\bar{1}$};
			\node[red, left] at (12,5.5) {$2$};
			\node[blue, left] at (12,6.5) {$2$};
		\end{tikzpicture}
	\end{center}
	
	\caption{A polyomino with the bounce path shown, and two marked red valleys. In this case, we should ignore the red letters.}
	\label{fig:bouncepath}
\end{figure}

\subsection{The statistic $\dinv$}

As for regular polyominoes, we have a third statistic, the $\dinv$. Let us call \textit{inversion} any pair of letters in the area word such that the one on the left is the successor, in the usual alphabet, of the one on the right (notice that this is the opposite of what the authors did in \cite{ADDHL}). Then we define the $\dinv$ of a reduced polyomino as the number of its inversions.

We do not give a decorated version for this statistic (though it can be defined), as we do not need it to state our results.

\subsection{A bijection swapping $(\area, \underline{\bounce})$ and $(\dinv, \underline{\area})$}

Let $\RP(m,n)$ be the set of reduced polyominoes of size $m \times n$. Let $\RP(m,n)^{\bullet k}$ be those with $k$ decorated red valleys, and $\RP(m,n)^{\star k}$ those with $k$ decorated rises.

\begin{theorem} \label{thm:zeta}
	There exists a bijection $\zeta \colon \RP(m,n)^{\bullet k} \rightarrow \RP(n,m)^{\star k}$ mapping $(\area, \underline{\bounce})$ to $(\dinv, \underline{\area})$ and swapping valleys and rises.
\end{theorem}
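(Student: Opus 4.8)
The plan is to construct $\zeta$ explicitly on area words, in the spirit of the classical sweep (zeta) map, and then to reduce the theorem to three checks: that $\zeta$ is a bijection onto $\RP(n,m)^{\star k}$; that it performs the stated swap of the \emph{undecorated} statistics $(\area,\bounce)\mapsto(\dinv,\area)$; and that it carries the marked red valleys of $P$ to the marked rises of $\zeta(P)$ compatibly with the decorated statistics, row by row.

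I would use the encoding of Section~1: a polyomino $P\in\RP(m,n)^{\bullet k}$ is recorded by its area word $w=w_1\cdots w_{m+n+1}$, a Dyck word in the alphabet $0<\bar 0<1<\bar 1<\cdots$ starting with $0$, with $m+1$ unbarred and $n$ barred letters, together with a set of $k$ marked red valleys. From $w$ one recovers the bounce path; writing $\ell(a)$ for the numeric value the bounce algorithm assigns to the level $a$, the statistic $\bounce(P)$ is the sum of $\ell(a)$ over the steps of the bounce path, and $\underline\bounce(P)$ is this sum with the value $\ell(a)$ deleted once for each marked red valley, where $a$ is the unbarred label of the vertical bounce step crossing that valley's row. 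The image $\zeta(P)$ will be an $n\times m$ reduced polyomino, specified by its area word $w'$ (a Dyck word with $n+1$ unbarred and $m$ barred letters) together with $k$ marked rises; recalling that a rise is a pair $\bar j,(j+1)$ of consecutive area-word letters, $\underline\area(\zeta(P))$ is the sum of the letters of $w'$ with the value $j+1$ deleted once for each marked rise.

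For the construction I would define $\zeta$ as a deterministic sweep-type procedure that reads $w$, processes its letters by level (in alphabet order, keeping within each level the left-to-right order of $w$), and emits the steps of the interlaced Dyck path of $\zeta(P)$, inserting east steps exactly where the changes of level in $w$ force them---the polyomino analogue of the fact that the sweep map sends the descents of the area word to the valleys of its image; a marked red valley of $P$ occupies a unique row, hence is attached to a unique letter of $w$, and I declare it to mark the rise of $\zeta(P)$ created from that letter. (Should an explicit global rule prove unwieldy, the fallback is to define $\zeta$ recursively along the bounce decomposition of $P$.) The verification then splits as follows. Bijectivity onto $\RP(n,m)^{\star k}$: exhibit the inverse, which reads the levels of $\zeta(P)$ in increasing order and is determined greedily, step by step, checking along the way that the inequalities $g_1+\cdots+g_i\le r_1+\cdots+r_i$ are preserved in both directions; together with the valley/rise matching below, this also transports the $k$ decorations. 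The swap of undecorated statistics: $\bounce(P)=\area(\zeta(P))$ because the letter of $w'$ emitted from a source letter at level $a$ has value $\ell(a)$, so, up to the forced leading letter $0$, the letters of $w'$ record exactly the bounce labels of $P$; and $\area(P)=\dinv(\zeta(P))$ by the standard sweep-map bookkeeping, which translates the letters of $w$ summed by $\area(P)$ into the successor-pairs of $w'$ counted by $\dinv(\zeta(P))$. The decorated part: the marked red valleys of $P$ correspond bijectively to the rises of $\zeta(P)$, and the unbarred letter $j+1$ of such a rise equals the bounce label $\ell(a)$ of the corresponding valley's row, so the amount deleted in $\underline\area(\zeta(P))$ for a marked rise equals the amount deleted in $\underline\bounce(P)$ for the matching marked valley; combined with $\bounce(P)=\area(\zeta(P))$ this yields $\underline\bounce(P)=\underline\area(\zeta(P))$.

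The main obstacle I expect is the decorated part, i.e.\ the valley/rise dictionary together with its weight-matching. Making an undecorated sweep-type map swap $(\area,\bounce)$ with $(\dinv,\area)$ is essentially classical, but one must check that the same map sends the red valleys of $P$---a feature of its two constituent lattice paths---to the rises of $\zeta(P)$---a feature of the area word of the image---and does so row by row with matching weights. I would handle this by analysing each bounce run of $P$ at a given level $a$ separately: identify which of its rows begins a horizontal-then-vertical turn of the red path, show that these are exactly the rows whose letter in $w$ is immediately followed by a change of level, and deduce that these, and only these, produce the rises of $\zeta(P)$, each carrying an unbarred letter of value $\ell(a)$. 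A secondary subtlety is the invertibility used in the bijection step, which for sweep-type maps is not automatic; here, where the area words use only a bounded initial segment of the alphabet, the inverse algorithm terminates and is well defined without trouble.
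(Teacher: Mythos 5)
Your proposal is correct and follows essentially the same route as the paper: a sweep/zeta-type map built from the bounce decomposition of $P$, whose image's area word is an anagram of the bounce labels (so $\underline{\bounce}\mapsto\underline{\area}$ with decorations carried along), whose inversions count the cells of $P$ (so $\area\mapsto\dinv$), and which is inverted by sorting the area word and reading off the interlacings. The paper phrases the construction as projecting the bounce labels onto the green and red path segments between bounce points and recording their interlacing---which is exactly your ``fallback'' recursion along the bounce decomposition---and its red-valley/rise dictionary is the same horizontal-then-vertical observation you identify as the main point to check.
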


\begin{proof}[Sketch of the proof.]
	The map $\zeta$ that we are going to define is very similar to the one in \cite[Section~4]{ADDHL}.
	Given a polyomino in $\RP(m,n)^{\bullet k}$, we want to write down the area word of a polyomino in $\RP(n,m)^{\star k}$ using the labels of its bounce path.
	
	We start writing down an amount of $0$'s equal to the number of $0$'s appearing as labels in the bounce path, plus one (we need to artificially add the initial $0$). Then we ``project'' the $0$ labels and the $\bar{0}$ labels of the bounce path on the steps of the green path to the right and below them, respectively, and we read them following the green path from $(0,0)$, bottom to top, left to right. We insert now the $\bar{0}$'s among our $0$'s according to the interlacing order we just read (of course, ignoring the first artificial $0$).
	
	After that, we repeat the procedure projecting $\bar{0}$'s and $1$'s on the steps of the red path going from the first to the second bounce point on the red path, and so on. In the image, we decorate a rise if and only if the corresponding red valley is decorated. We iterate these steps until we reach the end of the bounce path.
	
	This $\zeta$ obviously maps $\underline{\bounce}$ to $\underline{\area}$, since we are just taking an anagram of the labels while preserving decorations. It's easy to check that $\zeta$ also maps $\area$ to $\dinv$, since the inversions in the image correspond to the squares inside the polyomino. To go back, reorder in a weakly increasing way the area word to get the bounce path, and then draw the red and green a paths between the bouncing points according to the interlace in the original area word among consecutive (in the alphabet) letters.
\end{proof}

\begin{example}
	Take the polyomino in Figure~\ref{fig:bouncepath}. The interlacing between $0$'s and $\bar{0}$'s is $\bar{0} \bar{0} 0 0 0 \bar{0} \bar{0} \bar{0} \bar{0} \bar{0}$, the interlacing between $\bar{0}$'s and $1$'s gives $\bar{0} \bar{0} \bar{0} 1 \bar{0} \bar{0} {\color{red} 1} \bar{0} \bar{0}$, and so on. Notice that the decorated ${\color{red} 1}$ has been mapped to a rise. The image by $\zeta$ of the polyomino in Figure~\ref{fig:bouncepath} has area word $0 \bar{0} \bar{0} 0 0 0 \bar{0} 1 \bar{1} {\color{red} 2} \bar{1} \bar{1} \bar{0} \bar{0} {\color{red} 1} \bar{1} \bar{1} 2 \bar{0} \bar{0}$.
\end{example}

\subsection{Recursions for $(\area, \underline{\bounce})$ and $(\dinv, \underline{\area})$}

Let $\RP(m,n \backslash s)^{\bullet k}$ be the set of $m \times n$ reduced polyominoes with $k$ marked valleys and $s-1$ labels with value $0$ in the bounce path. Let \[ \RP_{q,t}(m,n \backslash s)^{\bullet k} \coloneqq \sum_{P \in \RP(m,n \backslash s)^{\bullet k}} q^{\area(P)} t^{\underline{\bounce}(P)}. \]

\begin{theorem}
	\label{th:bouncerecursion}
	For $m \geq 1$, $n \geq 1$, $k \geq 0$, and $1 \leq s \leq n+1$, the polynomials $\RP_{q,t}(m,n \backslash s)^{\bullet k}$ satisfy the recursion
	
	\begin{align*}
		\RP_{q,t}(m,n \backslash s)^{\bullet k} & = \sum_{r=1}^{m} t^{m+n+1-r-s-k} \qbinom{r+s-1}{r}_q \sum_{h=0}^{k} q^{\binom{h}{2}} \qbinom{r}{h}_q \\ & \quad \times \sum_{v=1}^{n-s+1} \qbinom{r+v-h-1}{r-1}_q \RP_{q,t}(m-r, n-s \backslash v)^{\bullet k-h}
	\end{align*}
	
	with initial conditions \[ \RP_{q,t}(m, n \backslash n+1)^{\bullet k} = \qbinom{m+n}{m}_q \] and $\RP_{q,t}(0, n \backslash s)^{\bullet k} = 1$ if $k=0$ and $s=n+1$, $0$ otherwise.
\end{theorem}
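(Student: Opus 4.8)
The plan is to establish the recursion by decomposing a decorated reduced polyomino according to the first segment of its bounce path, exactly in the spirit of the Garsia--Haglund proof of the $q,t$-Catalan recursion and of the recursions for decorated Dyck paths in \cite{DadderioVandenwyngaerd}. Fix $P \in \RP(m, n \backslash s)^{\bullet k}$ with $m, n \geq 1$. By definition of the subscript, the first vertical run of the bounce path of $P$ has exactly $s-1$ steps, all carrying the label $0$; let $r$ be the length of the first horizontal run (whose steps all carry $\bar{0}$), so $1 \leq r \leq m$, and let $h$ be the number of decorated red valleys belonging to this first block. Removing the first block and performing the evident shift leaves a decorated reduced polyomino $P' \in \RP(m-r, n-s \backslash v)^{\bullet k-h}$, where $v-1$ is the length of the first vertical run of the bounce path of $P'$; one checks $1 \leq v \leq n-s+1$. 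I would then prove that $P \mapsto (r, h, v; \text{weaving data}; P')$ is a bijection onto the set of all such tuples, and that under it the statistics factor as $\area(P) = (\text{weaving area}) + \area(P')$ and $\underline{\bounce}(P) = (m+n+1-r-s-k) + \underline{\bounce}(P')$, which yields the displayed identity after summing over $r, h, v$.

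The three $q$-binomials are to be identified with the area generating functions of the independent ``weaving'' choices that one makes when reattaching the first block to $P'$. Concretely, $\qbinom{r+s-1}{r}_q$ records how the green path traverses the first $s-1$ rows (a monotone path in an $r \times (s-1)$ rectangle); $q^{\binom{h}{2}} \qbinom{r}{h}_q$ records the internal area of the staircase formed by the $h$ decorated valleys together with the area-weighted freedom in positioning that staircase among the $r$ columns of the first horizontal run; and $\qbinom{r+v-h-1}{r-1}_q$ records the residual interleaving of the green path near the junction with $P'$ (a monotone path in an $(r-1) \times (v-h)$ rectangle). Since the part of $P$ outside the first block is an untouched copy of $P'$, the area generating function over all $P$ with prescribed $(r,h,v)$ factors as $\qbinom{r+s-1}{r}_q \, q^{\binom{h}{2}} \qbinom{r}{h}_q \, \qbinom{r+v-h-1}{r-1}_q \cdot \RP_{q,t}(m-r, n-s \backslash v)^{\bullet k-h}$ in the $q$-variable.

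For the $t$-exponent, the first block contributes $0$ to $\underline{\bounce}$ because all of its bounce labels equal $0$ or $\bar{0}$; peeling it lowers the value of every surviving bounce label by exactly one unit, and exactly $k$ of the surviving vertical bounce steps lie in a row of a decorated valley, so the surviving steps contribute $m+n+1-r-s-k$ in total, matching the exponent of $t$. For the base cases: if $s = n+1$ the first vertical run already reaches height $n$, which forces the red path to be the maximal path $(0,0) \to (0,n) \to (m,n)$; this red path has no valleys, and $P$ is then determined by its green path, an arbitrary monotone path from $(0,0)$ to $(m,n)$, so $\RP_{q,t}(m, n \backslash n+1)^{\bullet k} = \qbinom{m+n}{m}_q$; and if $m = 0$ both paths collapse to the single vertical segment, giving $1$ exactly when $k=0$ and $s=n+1$, and $0$ otherwise.

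I expect the main obstacle to be making the peeling surgery and its inverse fully precise --- pinning down exactly which row and which columns are absorbed into the first block, and checking that the three weaving parameters are genuinely independent and jointly recover each polyomino exactly once --- together with the matching $t$-exponent bookkeeping: one must verify that every one of the $k$ decorations, including the $h$ that are carried off with the first block and which a priori suppress only labels equal to $0$ or $\bar{0}$, nonetheless accounts for precisely one unit in $m+n+1-r-s-k$. This is exactly the point where the asymmetric shape $\qbinom{r+v-h-1}{r-1}_q$ (rather than $\qbinom{r+v-1}{r}_q$) and the extra factor $q^{\binom{h}{2}}$ have to be reconciled with the geometry of the second bounce run, in whose rows those $h$ decorations sit (with bounce label $1$ in $P$ and $0$ in $P'$). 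A cleaner alternative, once the bijection between decorated polyominoes and decorated labelled Dyck paths is available, is to transport the corresponding recursion for decorated labelled Dyck paths of \cite{DadderioVandenwyngaerd} through that bijection; but within the present section the self-contained bounce decomposition is the natural route.
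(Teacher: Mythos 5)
Your plan is essentially the paper's own proof: cut the polyomino one step after the second bounce, parametrize by $(r,h,v)$, match the three $q$-binomials to the interleaving of the green and red paths in the first block together with the placement of the $h$ decorated valleys among the $r$ columns (with the corrective $q^{\binom{h}{2}}$), and obtain the $t$-exponent from the global label shift minus the $k$ suppressed labels. The one wrinkle in your bookkeeping is that the peeled block contributes a single label $1$ (not only $0$'s and $\bar{0}$'s) to $\underline{\bounce}$ --- that is exactly where the $+1$ in $m+n+1-r-s-k$ comes from in the paper's accounting --- but this is among the details you already flag for verification.
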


\begin{proof}[Sketch of the proof.]
	Given a polyomino in $\RP(m,n \backslash s)^{\bullet k}$, whose bounce path has $r$ $\bar{0}$'s and $v$ $1$'s ($h$ of which are decorated), we can cut it one step after the second bounce and get a polyomino in $\RP(m-r, n-s \backslash v)^{\bullet k-h}$: see Figure~\ref{fig:bouncerecursion}.
	
	\begin{figure}[!h]
		\begin{center}
			\begin{tikzpicture}[scale=0.6]
			\draw[step=1.0,gray,opacity=0.6,thin] (0,0) grid (12,7);
			\filldraw[yellow, opacity=0.3] (0,0) -- (2,0) -- (2,3) -- (7,3) -- (7,4) -- (10,4) -- (10,5) -- (12,5) -- (12,6) -- (8,6) -- (8,5) -- (5,5) -- (5,4) -- (3,4) -- (3,3) -- (0,3) -- cycle;
			
			\draw[green, line width=3pt] (0,0) -- (2,0) -- (2,3) -- (7,3) -- (7,4) -- (10,4) -- (10,5) -- (12,5) -- (12,7);
			\draw[red, line width=3pt] (0,0) -- (0,3) -- (3,3) -- (3,4) -- (5,4) -- (5,5) -- (8,5) -- (8,6) -- (12,6) -- (12,7);
			
			\filldraw[fill=red]
			(5,4.5) circle (6pt)
			(8,5.5) circle (6pt);
			
			\draw[blue, line width=1.5pt] (0,0) -- (0,3) -- (7,3) -- (7,5) -- (12,5) -- (12,7);
			\draw[orange, line width=2pt] (7,4) rectangle (12,7);
			
			\node[blue, left] at (0,0.5) {$0$};
			\node[blue, left] at (0,1.5) {$0$};
			\node[blue, left] at (0,2.5) {$0$};
			\node[blue, below] at (0.5,3) {$\bar{0}$};
			\node[blue, below] at (1.5,3) {$\bar{0}$};
			\node[blue, below] at (2.5,3) {$\bar{0}$};
			\node[blue, below] at (3.5,3) {$\bar{0}$};
			\node[blue, below] at (4.5,3) {$\bar{0}$};
			\node[blue, below] at (5.5,3) {$\bar{0}$};
			\node[blue, below] at (6.5,3) {$\bar{0}$};
			\node[blue, left] at (7,3.5) {$1$};
			\node[red, left] at (7,4.5) {$1$};
			\node[blue, below] at (7.5,5) {$\bar{1}$};
			\node[blue, below] at (8.5,5) {$\bar{1}$};
			\node[blue, below] at (9.5,5) {$\bar{1}$};
			\node[blue, below] at (10.5,5) {$\bar{1}$};
			\node[blue, below] at (11.5,5) {$\bar{1}$};
			\node[red, left] at (12,5.5) {$2$};
			\node[blue, left] at (12,6.5) {$2$};
			\end{tikzpicture}
		\end{center}
		
		\caption{After one step of the recursion, we get the polyomino in the orange box.}
		\label{fig:bouncerecursion}
	\end{figure}

	The bounce of the original polyomino can be obtained by adding $m+n+1-r-s-k$ to the bounce of the smaller one. In fact, we are increasing by $1$ all the $m-r+n-s$ labels of the smaller polyomino, adding some $0$'s, $\bar{0}$'s (that do not contribute), and a single $1$ before those labels (which gives the $+1$), and then ignoring the $k$ marked letters ($h$ $0$'s that have become $1$'s, and $k-h$ other letters that increased by one, but have to be ignored).
	
	The area changes as follows. The green path from $(0,0)$ to $(r,s-1)$ gives a contribution taken into account by the first $q$-binomial (notice that the step from $(r,s-1)$ to $(r,s)$ must be vertical). The red path from $(0,s-1)$ to $(r, s+v-1)$ gives a contribution that can be split as follows: at first we ignore the decorated valleys, drawing a path from $(0,s-1)$ to $(r,s+v-h-1)$ which gives a contribution taken into account by the third $q$-binomial; then we add the $h$ decorated valleys that occur before the second bounce, that have to be placed in $h$ different spots (because they are valleys) among the $r$ available ones, for a contribution given by the second $q$-binomial together with its corrective power of $q$.
	
	Putting these together gives us the desired recursion.
\end{proof}

Let $\RP(n \backslash s,m)^{\star k}$ be the set of $n \times m$ reduced polyominoes with $k$ decorated rises and $s$ many $0$'s in the area word, and let $\RP_{q,t}(n \backslash s,m)^{\star k}$ be its $(\dinv, \underline{\area})$ $q,t$-enumerator.

\begin{corollary}
The recursion in Theorem~\ref{th:bouncerecursion} holds also for $\RP_{q,t}(n \backslash s,m)^{\star k}$.
\end{corollary}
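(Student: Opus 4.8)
The plan is to deduce this directly from the bijection $\zeta$ of Theorem~\ref{thm:zeta}, refined so as to keep track of the extra parameter $s$. First I would observe that, in the construction of $\zeta$ sketched above, the area word of the image polyomino begins with exactly one more $0$ than the number of $0$-labels occurring on the bounce path of the source polyomino, the extra one being the artificially added initial $0$. Consequently a polyomino in $\RP(m, n \backslash s)^{\bullet k}$, having $s-1$ many $0$-labels on its bounce path, is sent to a polyomino with exactly $s$ many $0$'s in its area word; since $\zeta$ also swaps marked valleys with marked rises, it restricts to a bijection
\[ \zeta \colon \RP(m, n \backslash s)^{\bullet k} \longrightarrow \RP(n \backslash s, m)^{\star k} . \]
As $\zeta$ maps $(\area, \underline{\bounce})$ to $(\dinv, \underline{\area})$, this restriction is weight-preserving, whence
\[ \RP_{q,t}(m, n \backslash s)^{\bullet k} = \RP_{q,t}(n \backslash s, m)^{\star k} \]
for all admissible $m, n, s, k$.

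With this identity in hand, the corollary is immediate: substituting $\RP_{q,t}(n \backslash s, m)^{\star k}$ for $\RP_{q,t}(m, n \backslash s)^{\bullet k}$ on the left-hand side of the recursion in Theorem~\ref{th:bouncerecursion}, and $\RP_{q,t}(n-s \backslash v, m-r)^{\star k-h}$ for each $\RP_{q,t}(m-r, n-s \backslash v)^{\bullet k-h}$ appearing on the right-hand side, yields exactly the recursion for the $(\dinv, \underline{\area})$ $q,t$-enumerators, with the same powers of $q$ and $t$ and the same $q$-binomial coefficients. The same identity, specialised at $s = n+1$ and at $m = 0$, transports the two initial conditions of Theorem~\ref{th:bouncerecursion} verbatim.

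The only step with genuine content — and the one I expect to need the most care — is the first: checking, from the explicit description of $\zeta$ in the proof of Theorem~\ref{thm:zeta}, that the statistic ``number of $0$-labels on the bounce path'' is carried (up to the $+1$ shift) to the statistic ``number of $0$'s in the area word'', so that $\zeta$ really does respect the refinement by $s$. Everything after that is a formal relabelling. If one preferred not to reopen the construction of $\zeta$, the corollary could instead be proved from scratch by imitating the proof of Theorem~\ref{th:bouncerecursion} directly on $n \times m$ decorated polyominoes — cutting along the area-word letters of value at most $\bar{1}$, and recording the contributions of $\dinv$ and $\underline{\area}$ in terms of the same $q$-binomials — but that repeats essentially the same bookkeeping for no additional gain.
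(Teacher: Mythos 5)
Your proposal is correct and follows one of the two routes the paper itself indicates (the remark after the corollary says it can be proved ``either directly as we did for $(\area,\underline{\bounce})$, or using the $\zeta$ map of Theorem~\ref{thm:zeta}''); your key observation that $\zeta$ sends the $s-1$ zero-labels of the bounce path to $s$ zeros of the area word (via the artificially added initial $0$) is exactly what makes $\zeta$ restrict to a bijection $\RP(m,n\backslash s)^{\bullet k}\to\RP(n\backslash s,m)^{\star k}$ respecting the refinement. Nothing further is needed.
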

This can be proved either directly as we did for $(\area, \underline{\bounce})$, or using the $\zeta$ map of Theorem~\ref{thm:zeta}.

\section{Two cars parking functions}

A \textit{parking function} is a Dyck path with positive integer labels attached to vertical steps such that they are strictly increasing along columns (bottom to top). A \textit{two cars parking function} is a parking function where the labels can only have value $1$ or $2$.

In his PhD thesis \cite{WilsonPhD}, Wilson stated some conjectures about these objects; we prove one of those conjectures, giving also another interpretation of the result in terms of reduced polyominoes.

\begin{figure}[!ht]
	\begin{center}
		\begin{tikzpicture}[scale=0.5]
			\draw[step=1.0, gray, opacity=0.6, thin] (0,0) grid (11,11);
			\draw[gray, opacity=0.6, thin] (0,0) -- (11,11);
			
			\draw[red, line width=2pt] (0,0) -- (0,1) -- (1,1) -- (1,2) -- (1,3) -- (2,3) -- (2,4) -- (3,4) -- (3,5) -- (4,5) -- (4,6) -- (5,6) -- (5,7) -- (6,7) -- (6,8) -- (7,8) -- (8,8) -- (8,9) -- (9,9) -- (9,10) -- (10,10) -- (10,11) -- (11,11);
			
			\node[red] at (0.5,0.5) {$1$};
			\node[red] at (1.5,1.5) {$1$};
			\node[red] at (1.5,2.5) {$2$};
			\node[red] at (2.5,3.5) {$2$};
			\node[red] at (3.5,4.5) {$1$};
			\node[red] at (4.5,5.5) {$1$};
			\node[red] at (5.5,6.5) {$1$};
			\node[red] at (6.5,7.5) {$2$};
			\node[red] at (8.5,8.5) {$1$};
			\node[red] at (9.5,9.5) {$2$};
			\node[red] at (10.5,10.5) {$1$};
		\end{tikzpicture}
	\end{center}
	
	\caption{A two cars parking function with seven $1$'s and four $2$'s.}
	\label{fig:2cpf}
	
\end{figure}
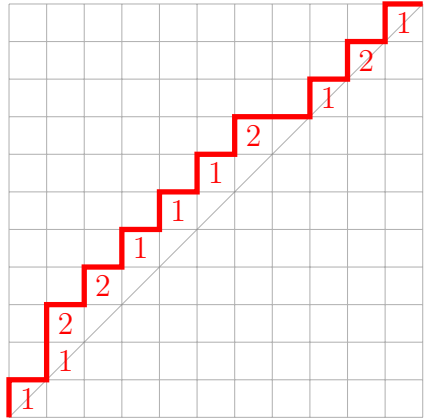

\begin{theorem}
	\label{th:twocarparkfunc}
	There is a bijection between $\RP(m,n)^{\star k}$ and the set of two cars parking functions with $n$ $1$'s, $m$ $2$'s, and $k$ decorated rises. It preserves the bistatistic $(\dinv, \underline{\area})$.
\end{theorem}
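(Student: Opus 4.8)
The plan is to realize the bijection directly on \emph{area words}. Recall from Section~1 that, via the area‑word encoding, an element of $\RP(m,n)^{\star k}$ is the same thing as a Dyck word $w = w_0 w_1 \cdots w_{m+n}$ in the ordered alphabet $0 < \bar 0 < 1 < \bar 1 < \cdots$ with $w_0 = 0$, exactly $m+1$ unbarred letters and $n$ barred letters, together with a choice of $k$ marked rises (an occurrence of a barred letter $\bar\jmath$ immediately followed by its successor $j+1$, the unbarred letter $j+1$ being the one actually marked). On the other side, a two cars parking function with $n$ ones and $m$ twos is the same thing as the area word $a = (a_1,\dots,a_{m+n})$ of a Dyck path of semilength $m+n$ (so $a_1 = 0$ and $a_{i+1} \le a_i + 1$) together with a labelling $\ell = (\ell_1,\dots,\ell_{m+n}) \in \{1,2\}^{m+n}$ that is strictly increasing along columns, i.e.\ $\ell_i < \ell_{i+1}$ whenever $a_{i+1} = a_i + 1$; the $k$ decorated rises are $k$ of the indices $i$ with $a_{i+1} = a_i + 1$.

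The map $\psi$ I would use erases the initial letter $w_0 = 0$ and recodes the remaining $m+n$ letters: a barred letter $\bar c$ in position $i$ becomes the pair $(a_i,\ell_i) = (c,1)$, an unbarred letter $c$ in position $i$ becomes $(a_i,\ell_i) = (c,2)$, and a marked rise is sent to the decoration of the corresponding pair of consecutive indices. First I would check well‑definedness, namely that the Dyck‑word condition on $w$ is equivalent to ``$a$ is a Dyck area word and $\ell$ is column‑strict''. This is a short case analysis on $w_i w_{i+1}$: if $w_i$ is unbarred its successor is barred of the same value, so $w_{i+1}$ below it forces $a_{i+1} \le a_i$ (no rise, no label constraint); if $w_i = \bar c$ its successor is $c+1$, so $w_{i+1}$ unbarred forces $a_{i+1} \le a_i + 1$ while $w_{i+1}$ barred forces $a_{i+1} \le a_i$; and the only configuration producing a rise, $a_{i+1} = a_i + 1$, has $w_i$ barred and $w_{i+1}$ unbarred, hence $\ell_i = 1 < 2 = \ell_{i+1}$. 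The condition $w_0 = 0$ gives $w_1 \le \bar 0$, i.e.\ $a_1 = 0$; and since the top letter of a rise is unbarred and the bottom letter is barred, two rises can never be consecutive, which is exactly the statement that no column of the parking function has three steps. The inverse map is the obvious one (re‑bar the positions carrying label $1$, keep the value $a_i$, prepend a $0$), so $\psi$ is a bijection, and by construction it matches the numbers of ones, of twos, and of decorated rises.

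It then remains to match the statistics, and the point is that $\psi$ is essentially ``the same word read two ways'', so both statistics are preserved termwise. For $\underline{\area}$: the polyomino statistic is the sum of the values of the letters of $w$ with the marked (unbarred) letters omitted; under $\psi$ the value of the letter in position $i$ is exactly $a_i$ (both $\bar c$ and $c$ contribute $c$), and the omitted letters are precisely the top steps of the decorated rises, which is exactly the modified area on the parking‑function side. For $\dinv$: an inversion of the polyomino is a pair $i<j$ with $w_i = \operatorname{succ}(w_j)$; since $\operatorname{succ}(\bar c) = c+1$ and $\operatorname{succ}(c) = \bar c$, this occurs in exactly two ways — either $w_i = c$ unbarred and $w_j = \overline{c-1}$, giving $a_i = a_j + 1$ and $\ell_i = 2 > 1 = \ell_j$, or $w_i = \bar c$ barred and $w_j = c$ unbarred, giving $a_i = a_j$ and $\ell_i = 1 < 2 = \ell_j$ — and these are precisely the two families of diagonal inversions counted by $\dinv$ on two cars parking functions. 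Hence $(\dinv,\underline{\area})$ is preserved.

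The hard part here is not mathematical depth but bookkeeping: I would need to pin down the conventions on the parking‑function side (the exact definition of $\dinv$ and of the modified $\underline{\area}$ for decorated rises, and which step of a double rise carries the decoration) so that they match the polyomino conventions on the nose; once those are fixed, every step above is a routine finite verification. A secondary point to be careful about is the correspondence between marked \emph{rises} of the polyomino (a barred letter followed by its successor) and rises of the parking function (a vertical step directly above a vertical step), which is exactly what the configuration $a_{i+1} = a_i + 1$ with $w_i$ barred, $w_{i+1}$ unbarred encodes.
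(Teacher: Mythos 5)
Your proposal is correct and is essentially the paper's own bijection: the paper's (one-sentence) proof sketch takes a two cars parking function, bars the letters of its area word labelled by a $1$, and prepends a $0$, which is exactly the inverse of your map $\psi$. Your verification of well-definedness and of the preservation of $(\dinv,\underline{\area})$ fills in the routine checks the paper omits.
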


\begin{proof}[Scketch of the proof]
	The bijection is extremely simple (and it motivates our definition of $\dinv$): given a parking function, take its area word and put a bar on the letters labelled by a $1$, then add a $0$ at the beginning. This way you get the area word of a reduced polyomino, and both $\dinv$ and $\underline{\area}$ are preserved.
\end{proof}

Let $\mathcal{T}_{q,t}(n\backslash s,m)^{\star k}$ be the polynomial $\sum_{P}q^{\dinv(P)} t^{\underline{\area}(P)}$, where the sum is over two cars parking functions with $m$ $1$'s, $n$ $2$'s of which $s-1$ on the main diagonal, and $k$ decorated rises.

The following corollary of Theorem~\ref{th:twocarparkfunc} is essentially \cite{WilsonPhD}*{Proposition~5.3.3.1}.
\begin{corollary}[Wilson] \label{cor:Wilson}
The polynomial $\mathcal{T}_{q,t}(n\backslash s,m)^{\star k}$ 
satisfies the same recursion as $\RP_{q,t}(m,n\backslash s)^{\bullet k}$ in Theorem~\ref{th:bouncerecursion}. 
\end{corollary}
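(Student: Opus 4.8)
The plan is to deduce Corollary~\ref{cor:Wilson} from Theorem~\ref{th:twocarparkfunc} together with the bijection $\zeta$ of Theorem~\ref{thm:zeta}, by showing the chain of equalities
$\mathcal{T}_{q,t}(n\backslash s,m)^{\star k}=\RP_{q,t}(n\backslash s,m)^{\star k}=\RP_{q,t}(m,n\backslash s)^{\bullet k}$,
after which the desired recursion is exactly the one in Theorem~\ref{th:bouncerecursion}. The only genuinely new content is the first equality, and for that it suffices to check that the bijection of Theorem~\ref{th:twocarparkfunc} matches up the three refining parameters on the parking-function side (number of ones, number of twos, number of twos on the main diagonal) with the three parameters defining $\RP(n\backslash s,m)^{\star k}$ (the two dimensions and the number of zeros in the area word).

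First I would recall the bijection of Theorem~\ref{th:twocarparkfunc}: from a two cars parking function with $m$ ones and $n$ twos and $k$ decorated rises one bars the area-word letters carrying label $1$ and prepends a $0$, obtaining the area word of an $n\times m$ reduced polyomino with $k$ decorated rises; by that theorem the map preserves $(\dinv,\underline{\area})$ and the decorated rises. It therefore only remains to translate the condition ``exactly $s-1$ of the $2$-labels lie on the main diagonal'' into a statement about the image. This is immediate from the construction: a vertical step of the parking function lies on the main diagonal exactly when the corresponding letter of its area word equals $0$; under the map such a letter becomes $\bar 0$ when its label is $1$ and stays $0$ when its label is $2$, and no other letter of the polyomino's area word can equal $0$ except the prepended initial $0$. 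Hence the image polyomino has exactly $(s-1)+1=s$ letters equal to $0$, so it lies in $\RP(n\backslash s,m)^{\star k}$, and conversely every element of $\RP(n\backslash s,m)^{\star k}$ arises this way. This gives $\mathcal{T}_{q,t}(n\backslash s,m)^{\star k}=\RP_{q,t}(n\backslash s,m)^{\star k}$.

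For the second equality, note that in the construction of $\zeta$ one writes down a number of $0$'s equal to the number of $0$-labels of the bounce path plus one, and no further $0$'s afterwards; thus $\zeta$ restricts to a bijection $\RP(m,n\backslash s)^{\bullet k}\to\RP(n\backslash s,m)^{\star k}$ sending $(\area,\underline{\bounce})$ to $(\dinv,\underline{\area})$, whence $\RP_{q,t}(n\backslash s,m)^{\star k}=\RP_{q,t}(m,n\backslash s)^{\bullet k}$. There is essentially no obstacle in any of this — it is all a matter of tracking definitions — and the one point worth flagging is the off-by-one coming from the artificially prepended $0$ (mirrored by the artificial initial $0$ in $\zeta$), which is precisely what produces the shift between ``$s-1$ twos on the diagonal'' on the parking-function side and ``$s$ zeros in the area word'' on the polyomino side, and which reconciles the indexing in Theorems~\ref{th:bouncerecursion}, \ref{thm:zeta}, and \ref{th:twocarparkfunc}.
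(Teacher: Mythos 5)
Your proposal is correct and takes essentially the same route as the paper: the corollary is obtained from the parking-function bijection of Theorem~\ref{th:twocarparkfunc} together with the fact, proved via the $\zeta$ map of Theorem~\ref{thm:zeta} (or directly), that $\RP_{q,t}(n\backslash s,m)^{\star k}$ satisfies the recursion of Theorem~\ref{th:bouncerecursion}. Your explicit tracking of the parameter $s$ through both bijections (the $s-1$ diagonal $2$'s plus the prepended $0$ giving $s$ zeros in the area word, matching the $s-1$ zero labels of the bounce path) just fills in bookkeeping that the extended abstract leaves implicit.
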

In particular, $\mathcal{T}_{q,t}(n\backslash s,m)^{\star k}=\RP_{q,t}(m,n\backslash s)^{\bullet k}$. We use this connection to prove \cite{WilsonPhD}*{Conjecture~5.2.2.1}.

%

\section{Symmetric functions and combinatorial interpretations}

All those combinatorial polynomials have an interpretation in terms of symmetric functions, that we are going to state. First of all, we establish the following symmetric function identities (see for example \cite{DadderioVandenwyngaerd} for the undefined notations).

\begin{theorem}
	Let $m \geq 0$, $n \geq 0$ and $0 \leq k \leq n$. Then
	\[ \< \Delta_{h_m} e_{n+1}, s_{k+1,1^{n-k}} \> = \< \Delta_{e_{m+n-k-1}}' e_{m+n}, h_m h_n \>. \]
\end{theorem}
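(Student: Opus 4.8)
The plan is to prove the identity purely within the theory of symmetric functions, using known expansions of the two sides in a common basis and then comparing. I would first try to reduce both sides to scalar products against $e$'s or $h$'s that can be evaluated via the Cauchy-type identities and the standard facts about the $\Delta$ operators. A natural first step is to rewrite the left-hand side using the relation $s_{k+1,1^{n-k}} = e_{n-k} h_{k+1} - e_{n-k+1} h_k$ (the Jacobi--Trudi / Pieri expansion for a hook Schur function), so that
\[
\< \Delta_{h_m} e_{n+1}, s_{k+1,1^{n-k}} \> = \< \Delta_{h_m} e_{n+1}, e_{n-k} h_{k+1} \> - \< \Delta_{h_m} e_{n+1}, e_{n-k+1} h_k \>.
\]
Each term on the right is a coefficient of the form $\< \Delta_{h_m} e_{n+1}, e_a h_b \>$ with $a+b = n+1$, which by the results recalled in \cite{DadderioVandenwyngaerd} (building on \cite{HaglundqtSchroeder}) has a known closed form — essentially a $q,t$-Schröder-type polynomial or a $\Delta'$-coefficient. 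The goal is then to match the resulting two-term difference with $\< \Delta_{e_{m+n-k-1}}' e_{m+n}, h_m h_n \>$.

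For the right-hand side I would use the Pieri-type / addition formulas for $\Delta'_{e_j}$ acting on $e_N$, together with the fact that pairing with $h_m h_n$ amounts to a principal-specialization-like operation that splits $e_{m+n}$ into its $(m,n)$-bigraded pieces. Concretely, I expect to invoke an identity of the shape
\[
\< \Delta'_{e_{N-1-k-1}} e_N, h_m h_n \> = \sum_{\text{something}} \< \Delta_{h_?} e_?, e_? h_? \>
\]
or, going the other way, an identity expressing $\< \Delta_{h_m} e_{n+1}, e_a h_b \>$ in terms of $\Delta'$-coefficients of $e_{m+n}$; such "symmetry" or "reciprocity" statements between $\Delta_{h_m}$ on a small alphabet and $\Delta'_{e_j}$ on a large alphabet are exactly the kind of thing proved in \cite{DadderioVandenwyngaerd}. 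The cleanest route is probably to express \emph{both} sides as the same explicit $q,t$-polynomial — for instance by showing each equals a sum over the combinatorial objects of the previous sections, i.e. by appealing to Theorem~\ref{th:bouncerecursion} and the recursion it satisfies — but since the theorem is stated as a pure symmetric-function identity, I would prefer a direct manipulation that does not pass through combinatorics, keeping the combinatorial identification for the subsequent theorems.

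The key steps, in order, would be: (1) expand $s_{k+1,1^{n-k}}$ as a $\mathbb{Z}$-combination of $e_a h_b$ with $a + b = n+1$ (just two terms, by the hook formula); (2) apply the known evaluation of $\< \Delta_{h_m} e_{n+1}, e_a h_b \>$ from \cite{DadderioVandenwyngaerd}; (3) apply the known evaluation/Pieri rule for $\Delta'_{e_{m+n-k-1}} e_{m+n}$ paired with $h_m h_n$ from the same source; (4) verify that the two resulting expressions — each a sum of $q,t$-binomials or a single $\Delta'$-coefficient — coincide, e.g. by a short generating-function or telescoping argument, or by checking they satisfy the same recursion with the same initial conditions. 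The main obstacle I anticipate is step (4): after the reductions, matching a two-term alternating difference of Schröder-type coefficients against a single $\Delta'$-coefficient is likely to require a nontrivial symmetric-function lemma — probably the very "summation identities" the authors refer to when they say "we prove some symmetric function identities \dots based on theorems in \cite{DadderioVandenwyngaerd} and \cite{HaglundqtSchroeder}" — rather than a one-line computation. If a direct match proves elusive, the fallback is to show both sides satisfy the recursion of Theorem~\ref{th:bouncerecursion} (with the appropriate boundary conditions), which reduces the whole statement to a finite check plus an induction.
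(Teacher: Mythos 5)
The extended abstract states this identity without proof, deferring to the full version and saying only that the argument rests on symmetric-function identities built on \cite{DadderioVandenwyngaerd} and \cite{HaglundqtSchroeder}; so there is no detailed argument in the paper to compare against, and I can only assess your plan on its own terms. Your general direction --- reduce the left-hand side to coefficients of the form $\< \Delta_{h_m} e_{n+1}, e_a h_b \>$ and invoke known Schr\"oder-type evaluations --- is consistent in spirit with the paper's remark. However, your step (1) contains a concrete error: a hook Schur function is \emph{not} a two-term integer combination of products $e_a h_b$. The correct two-term identity goes the other way, $h_a e_b = s_{(a,1^b)} + s_{(a+1,1^{b-1})}$, and iterating it gives $s_{(a,1^b)} = \sum_{i=0}^{b}(-1)^i h_{a+i} e_{b-i}$, an alternating sum with $n-k+1$ terms in your setting. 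The expression you wrote, $e_{n-k}h_{k+1}-e_{n-k+1}h_k$, actually equals $s_{(k+2,1^{n-k-1})}-s_{(k,1^{n-k+1})}$ (check $k=0$, $n=1$: it gives $s_{2}$, not $s_{1,1}$). Everything downstream that relies on ``just two terms'' therefore has to be redone with the full alternating sum, which changes the matching problem in your step (4) considerably.

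Beyond that, steps (2)--(4) are a research plan rather than a proof: the identities you propose to invoke are written only ``of the shape'' with unspecified summation ranges, and you yourself flag the decisive step (4) as an obstacle requiring ``a nontrivial symmetric-function lemma'' that you do not supply. As it stands the proposal neither produces the identity nor isolates the precise lemma that would. Your fallback --- showing both sides satisfy the recursion of Theorem~\ref{th:bouncerecursion} with the same initial conditions --- is viable in principle and is close in flavour to how the paper handles the combinatorial polynomial in Theorem~\ref{thm:lemma} (via \cite{DadderioVandenwyngaerd}*{Theorem~4.6} and the $F_{n,k}^{(d,\ell)}$ expansion), but to apply it here you would have to verify the recursion independently for \emph{each} of the two scalar products, which amounts to two results of the same difficulty as Theorem~\ref{thm:lemma} rather than a finite check plus induction.
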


\begin{theorem}
	Let $m \geq 0$, $n \geq 0$ and $0 \leq k \leq n$. Then
	\[ \sum_{r=1}^{m-k} t^{m-r-k} \< \Delta_{h_{m-r-k}} \Delta_{e_k} e_n \left[X \frac{1-q^r}{1-q} \right], e_n \> = \< \Delta_{h_m} e_{n+1}, s_{k+1,1^{n-k}} \>. \]
\end{theorem}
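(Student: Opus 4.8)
The plan is to prove the identity by showing that both sides satisfy the same recursion in $m$, together with matching base cases. The right-hand side $\langle \Delta_{h_m} e_{n+1}, s_{k+1,1^{n-k}} \rangle$ is, by the combinatorial results of the preceding sections (Theorems~\ref{thm:zeta}, \ref{th:bouncerecursion}, and the Wilson-type corollaries), the $q,t$-enumerator $\RP_{q,t}(m,n \backslash s)^{\bullet k}$ summed appropriately over $s$, so it inherits the recursion of Theorem~\ref{th:bouncerecursion}. Thus it suffices to show that the left-hand side, as a function of $m$, obeys the corresponding symmetric-function recursion; then induction on $m$ finishes the argument once the $m=0$ (or $m=k$) base case is checked directly.

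Concretely, I would first record the base case: when $m = k$ the sum on the left collapses to the single term $r = 0$... wait, the sum starts at $r=1$, so when $m - k = 0$ the sum is empty and the left side is $0$; correspondingly $\langle \Delta_{h_k} e_{n+1}, s_{k+1,1^{n-k}}\rangle$ should vanish for the relevant range, which one checks from the known Schur expansion (via the results in \cite{HaglundqtSchroeder}). More usefully, one isolates the $r=1$ term, which gives $t^{m-1-k}\langle \Delta_{h_{m-1-k}}\Delta_{e_k} e_n, e_n\rangle$ — a genuine base-type object — and peels off the tail. The engine of the proof is a symmetric-function manipulation: expand $e_n\!\left[X\frac{1-q^r}{1-q}\right]$ using the addition formula $\frac{1-q^r}{1-q} = 1 + q\frac{1-q^{r-1}}{1-q}$, so that $e_n\!\left[X\frac{1-q^r}{1-q}\right] = \sum_{j} q^j e_{n-j}[X] \, e_j\!\left[X\frac{1-q^{r-1}}{1-q}\right]$ (up to the standard plethystic bookkeeping), and then reindex the double sum over $(r,j)$ to recognize the $r \mapsto r-1$ shifted version of the left-hand side, picking up exactly the $q$-binomial and power-of-$t$ weights that appear in Theorem~\ref{th:bouncerecursion}. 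This is where the $\Delta_{h_{m-r-k}}$ factors and the $e_n$ pairing must be pushed through the plethystic substitution using the identities established in \cite{DadderioVandenwyngaerd}.

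The main obstacle I anticipate is matching the combinatorial recursion's shape — with its three nested sums over $r$, $h$, and $v$, the $q^{\binom{h}{2}}$ correction, and the Gaussian binomials $\qbinom{r+s-1}{r}_q$, $\qbinom{r}{h}_q$, $\qbinom{r+v-h-1}{r-1}_q$ — against whatever falls out of the symmetric-function side. The cleanest route is probably not to attack Theorem~\ref{th:bouncerecursion} head-on but to use the second displayed theorem above (the one I am asked to prove) as the \emph{definition} of a new auxiliary family, prove \emph{its} recursion purely symmetric-functionally via the plethystic expansion just described, and then check that this recursion coincides with the combinatorial one after the substitution $\Delta_{h_m}e_{n+1} \leftrightarrow \RP_{q,t}$; the $h$-sum and the $v$-sum on the combinatorial side should correspond respectively to the internal $e$-expansion and to the $\Delta_{e_k}e_n$ Schröder-type structure from \cite{HaglundqtSchroeder}. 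Care is needed with the range of summation and the boundary terms ($s=n+1$ versus $s \le n$), since the combinatorial initial condition $\RP_{q,t}(m,n\backslash n+1)^{\bullet k} = \qbinom{m+n}{m}_q$ must match a correspondingly degenerate symmetric-function evaluation.

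Finally, once the recursions are aligned and the base cases verified, the identity follows by induction on $m$ (for all $n$ and all $0 \le k \le n$ simultaneously). I would present the argument by first stating the symmetric-function recursion satisfied by the left-hand side as a lemma, proving it via the plethystic expansion of $e_n\!\left[X\frac{1-q^r}{1-q}\right]$, then invoking Theorem~\ref{th:bouncerecursion} and the Wilson corollary to see the right-hand side satisfies the same recursion with the same initial data, and concluding by uniqueness of solutions to the recursion.
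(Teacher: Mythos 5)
There is a genuine gap here, and it is structural: your argument is circular in its treatment of the right-hand side. You propose to show that $\langle \Delta_{h_m} e_{n+1}, s_{k+1,1^{n-k}}\rangle$ satisfies the recursion of Theorem~\ref{th:bouncerecursion} because it ``is, by the combinatorial results of the preceding sections, the $q,t$-enumerator $\RP_{q,t}$.'' But Theorems~\ref{thm:zeta}, \ref{th:bouncerecursion} and Corollary~\ref{cor:Wilson} are purely combinatorial statements about polyominoes and parking functions; none of them mentions $\Delta_{h_m}e_{n+1}$. The identification of $\langle \Delta_{h_m}e_{n+1}, s_{k+1,1^{n-k}}\rangle$ with $\RP_{q,t}(m,n)^{\star k}$ is the \emph{final} theorem of the section, and in the paper's architecture it is \emph{deduced from} the present statement together with Theorem~\ref{thm:lemma}; you cannot assume it here. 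A second, related obstruction: the recursion of Theorem~\ref{th:bouncerecursion} is a recursion on the family refined by the parameter $s$, not on the unrefined enumerator. The left-hand side of the statement comes naturally $r$-refined (Theorem~\ref{thm:lemma} is exactly the assertion that the $r$-th summand equals $\RP_{q,t}(m\backslash r,n)^{\star k}$, and \emph{that} is the theorem proved by the matching-recursions method), whereas the right-hand side is a single unrefined scalar product with no given $s$-decomposition, so ``both sides satisfy the same recursion'' is not even well posed for it. The present theorem is precisely the step that cannot be handled by the combinatorial recursion: it is a pure symmetric-function identity, to be proved by plethystic/Macdonald-operator manipulations resting on the results of \cite{DadderioVandenwyngaerd} and \cite{HaglundqtSchroeder}. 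Your idea of splitting $\frac{1-q^r}{1-q} = 1 + q\,\frac{1-q^{r-1}}{1-q}$ is a sensible ingredient for such a computation, but as written it is only a sketch of a recursion for the left-hand side, and the $\Delta$ operators do not commute with the multiplications implicit in the addition formula, so even that step requires the specific identities from the cited references rather than ``standard plethystic bookkeeping.''

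A smaller but concrete problem is your induction anchor. You assert that at $m=k$ the right-hand side ``should vanish'' to match the empty sum on the left, but you do not check this, and it is false in general: already $\langle \Delta_{h_1}e_2, s_2\rangle = 1 \neq 0$. So either the statement carries an implicit restriction that you would need to identify and respect, or your base case fails; in either event the induction as proposed does not close.
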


Our main result is the following theorem.

\begin{theorem} \label{thm:lemma}
\begin{equation} \label{eq:thm_lemma}
 \RP_{q,t}(m \backslash r, n)^{\star k} = t^{m-r-k} \< \Delta_{h_{m-r-k}} \Delta_{e_k} e_n \left[ X \dfrac{1 - q^r}{1 - q} \right], e_n \>. 
\end{equation}
\end{theorem}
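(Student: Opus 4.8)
The plan is to prove the identity by showing that both sides satisfy the same recursion with the same initial conditions, since we already have a combinatorial recursion for the left side and a natural symmetric-function recursion available for the right. Concretely, I would first combine Theorem~\ref{th:bouncerecursion}, the Corollary after it, and Theorem~\ref{th:twocarparkfunc} with Corollary~\ref{cor:Wilson} to transfer the recursion of Theorem~\ref{th:bouncerecursion} onto $\RP_{q,t}(m\backslash r,n)^{\star k}$ (via the $\zeta$ map of Theorem~\ref{thm:zeta}, which swaps $(\area,\underline{\bounce})$ with $(\dinv,\underline{\area})$ and interchanges the roles of $m,n$ and valleys/rises). This gives a fully explicit recursion, in the variable that plays the role of ``number of $0$'s in the area word'', for the quantity on the left-hand side of \eqref{eq:thm_lemma}, together with explicit base cases coming from $\RP_{q,t}(m,n\backslash n+1)^{\bullet k}=\qbinom{m+n}{m}_q$.

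Next I would establish the matching recursion on the symmetric-function side. Here the right-hand side is $t^{m-r-k}\langle \Delta_{h_{m-r-k}}\Delta_{e_k}e_n[X\tfrac{1-q^r}{1-q}],e_n\rangle$, and the idea is to expand $e_n[X\tfrac{1-q^r}{1-q}]$, or more efficiently to use a known recursion for the relevant $q,t$-Catalan-type polynomials. The natural tool is the machinery from \cite{HaglundqtSchroeder} and \cite{DadderioVandenwyngaerd}: the coefficients $\langle \Delta_{h_a}\Delta_{e_k}e_n[X\tfrac{1-q^r}{1-q}],e_n\rangle$ are, up to normalization, specializations of $q,t$-Schröder-type polynomials, and these are governed by recursions obtained by peeling off the bottom row of the associated Dyck path (equivalently, by applying the relevant Macdonald-operator identities). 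I would match the summation indices: the outer sum over $r$ in Theorem~\ref{th:bouncerecursion} against the number of north steps in the first ``run'', the $q$-binomials against the standard $\qbinom{r+s-1}{r}_q$, $\qbinom{r}{h}_q$, $\qbinom{r+v-h-1}{r-1}_q$ factors that arise in the plethystic/Schröder recursion, and the power $q^{\binom{h}{2}}$ against the sign-reversing contribution of the $k$ decorated rises (which on the symmetric-function side come from the $\Delta_{e_k}$ factor). The two intermediate theorems in the excerpt — the identity $\langle \Delta_{h_m}e_{n+1},s_{k+1,1^{n-k}}\rangle=\langle \Delta'_{e_{m+n-k-1}}e_{m+n},h_mh_n\rangle$ and the sum formula $\sum_r t^{m-r-k}\langle\Delta_{h_{m-r-k}}\Delta_{e_k}e_n[X\tfrac{1-q^r}{1-q}],e_n\rangle=\langle\Delta_{h_m}e_{n+1},s_{k+1,1^{n-k}}\rangle$ — are exactly the bridge: once Theorem~\ref{thm:lemma} is proved for all $r$, summing over $r$ and applying these two identities recovers the polyomino interpretation of $\langle\Delta'_{e_{m+n-k-1}}e_{m+n},h_mh_n\rangle$, so it is natural to prove Theorem~\ref{thm:lemma} first at the refined ($\backslash r$) level and only then sum.

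I would then carry out the induction: both sides vanish or reduce to $\qbinom{m+n}{m}_q$ in the extreme cases, and for the inductive step one feeds the recursion into $\RP_{q,t}(m-r,n-s\backslash v)^{\bullet k-h}=\mathcal T_{q,t}(\ldots)=t^{\ast}\langle\Delta_{h_\ast}\Delta_{e_{k-h}}e_{n-s}[X\tfrac{1-q^v}{1-q}],e_{n-s}\rangle$ (being careful about the index bookkeeping induced by the $\zeta$-swap, which exchanges $m\leftrightarrow n$ and the ``$\backslash s$'' slot) and checks term-by-term that the resulting sum equals the expansion produced by the symmetric-function recursion. The main obstacle I anticipate is precisely this matching of the two recursions: the combinatorial recursion is phrased in terms of the bounce path and three $q$-binomials with a $q^{\binom h2}$ correction, whereas the Macdonald-operator recursion for $\Delta_{h_a}\Delta_{e_k}e_n[X\tfrac{1-q^r}{1-q}]$ naturally produces plethystic substitutions and Pieri coefficients; reconciling them requires a genuine symmetric-function computation (using the addition formula for the plethysm $e_n[X\tfrac{1-q^r}{1-q}]$, summation identities for Gaussian binomials such as the $q$-Vandermonde/$q$-Chu identity, and the explicit form of the $\Delta$-operator action), and getting the power of $t$ and the decorated-rise contribution to line up exactly is where the delicate work lies. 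A secondary subtlety is making sure the index ranges ($1\le s\le n+1$, $0\le h\le k$, $1\le v\le n-s+1$) correspond correctly after the $m\leftrightarrow n$ transposition so that the base cases of the two inductions are genuinely the same.
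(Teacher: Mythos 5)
Your proposal follows essentially the same strategy as the paper: show that $\RP_{q,t}(m\backslash r,n)^{\star k}$ and the symmetric-function side both satisfy the recursion of Theorem~\ref{th:bouncerecursion} with matching initial conditions, deriving the symmetric-function recursion from the machinery of \cite{DadderioVandenwyngaerd} (the paper specifically invokes the $F_{n,k}^{(d,\ell)}$ of that reference and its Theorem~4.6, then performs a change of variables to reach the form of Theorem~\ref{th:bouncerecursion}). The delicate index-matching you flag is exactly where the paper's ``suitable manipulation'' lives, so your plan is aligned with the published argument.
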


\begin{proof}[Sketch of the proof.]
	The right hand side of \eqref{eq:thm_lemma} can be written in terms of the $F_{n,k}^{(d,\ell)}$ of \cite[Section~4]{DadderioVandenwyngaerd} using \cite[Equation (1.46)]{DadderioVandenwyngaerd}. Now this and \cite{DadderioVandenwyngaerd}*{Theorem~4.6} give a recursion for our polynomials. A suitable manipulation (involving a change of variables) can bring it to the form stated in Theorem~\ref{th:bouncerecursion}.
\end{proof}

All these results together give the following theorem.

\begin{theorem}
	\[ \RP_{q,t}(m,n)^{\star k} = \< \Delta_{h_m} e_{n+1}, s_{k+1,1^{n-k}} \> = \< \Delta_{e_{m+n-k-1}}' e_{m+n}, h_m h_n \> \]
\end{theorem}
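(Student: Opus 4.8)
The plan is to obtain the chain of equalities by assembling results already in hand — Theorem~\ref{thm:lemma} together with the two symmetric function identities established above — bridged by a single elementary combinatorial decomposition of $\RP_{q,t}(m,n)^{\star k}$. So the proof itself will be short; all of the real content sits in those earlier statements.

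First I would refine the set $\RP(m,n)^{\star k}$ according to the number $r$ of $0$'s occurring in the area word. Since every area word of a reduced polyomino begins with $0$, one has $r \ge 1$, and this refinement gives a disjoint decomposition $\RP(m,n)^{\star k} = \bigsqcup_{r \ge 1} \RP(m \backslash r, n)^{\star k}$. Because $\dinv$ and $\underline{\area}$ are simply the restrictions of the same statistics to each block, this yields
\[ \RP_{q,t}(m,n)^{\star k} = \sum_{r \ge 1} \RP_{q,t}(m \backslash r, n)^{\star k}. \]

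Next I would substitute Theorem~\ref{thm:lemma} into each summand, turning the right-hand side into a sum of the shape $\sum_{r} t^{m-r-k} \< \Delta_{h_{m-r-k}} \Delta_{e_k} e_n\!\left[ X \frac{1-q^r}{1-q} \right], e_n \>$. The terms indexed by values of $r$ outside the summation range of the second symmetric function identity contribute nothing on either side: on the symmetric-function side because $h_j = 0$ for $j < 0$, and on the combinatorial side because once $r$ letters of the area word equal $0$ there is no room left to accommodate $k$ decorated rises, whose unbarred letters all exceed $0$. What survives is precisely the left-hand side of the second symmetric function identity, so the sum equals $\< \Delta_{h_m} e_{n+1}, s_{k+1,1^{n-k}} \>$; this is the first asserted equality. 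The second asserted equality, $\< \Delta_{h_m} e_{n+1}, s_{k+1,1^{n-k}} \> = \< \Delta_{e_{m+n-k-1}}' e_{m+n}, h_m h_n \>$, is then exactly the first symmetric function identity.

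The step that carries the weight is not this assembly but Theorem~\ref{thm:lemma} upstream, whose proof rewrites its right-hand side through the functions $F_{n,k}^{(d,\ell)}$ of \cite[Section~4]{DadderioVandenwyngaerd}, invokes \cite{DadderioVandenwyngaerd}*{Theorem~4.6} to produce a recursion, and then — after a change of variables — matches that recursion with the combinatorial recursion of Theorem~\ref{th:bouncerecursion} and its $\star$-version; the two symmetric function identities in turn rest on results of \cite{DadderioVandenwyngaerd} and \cite{HaglundqtSchroeder}. Inside the final assembly the only genuinely non-vacuous point is the boundary bookkeeping just mentioned: checking that $\RP(m \backslash r, n)^{\star k}$ is empty exactly in the range where the corresponding symmetric-function term is forced to vanish, so that the two sums can be identified term by term.
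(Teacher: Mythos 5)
Your assembly is exactly the one the paper intends (the paper itself offers no further argument beyond ``all these results together give the following theorem''): stratify $\RP(m,n)^{\star k}$ by the number $r$ of $0$'s in the area word, apply Theorem~\ref{thm:lemma} to each stratum, recognise the resulting sum as the left-hand side of the second symmetric function identity, and finish with the first identity. In outline this is the right, and the intended, proof.

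However, the one step you yourself single out as non-vacuous --- the boundary bookkeeping --- is stated incorrectly, and as written the identification of the two sums does not close. A decorated rise marks an \emph{unbarred} letter of value at least $1$; the area word of an $m\times n$ reduced polyomino has $m+1$ unbarred letters, $r$ of which equal $0$, so accommodating $k$ decorated rises only forces $r\le m+1-k$, not $r\le m-k$. Hence $\RP(m\backslash r,n)^{\star k}$ can be nonempty at $r=m+1-k$, one step beyond the summation range $1\le r\le m-k$ of the second identity. Concretely, for $m=n=k=1$ the decorated area word $0\,\bar 0\,1^{\ast}$ lies in $\RP(1\backslash 1,1)^{\star 1}$ and contributes $1$ to $\RP_{q,t}(1,1)^{\star 1}$, while $\sum_{r=1}^{m-k}$ is an empty sum; and the right-hand side of \eqref{eq:thm_lemma} at $r=m+1-k$ reads $t^{-1}\< \Delta_{h_{-1}}\cdots\>$, which under the usual convention $h_{-1}=0$ gives $0$, not $1$. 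So either the index ranges in Theorem~\ref{thm:lemma} and in the second identity must be adjusted and re-matched against the full stratification, or the extremal stratum $r=m+1-k$ must be handled separately; your argument instead identifies the sums term by term on the strength of an emptiness claim that is off by one.
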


Combining this with Corollary~\ref{cor:Wilson}, this results proves the case $h_m h_n$ of the Delta conjecture in \cite{HaglundRemmelWilson}, i.e. \cite{WilsonPhD}*{Conjecture~5.2.2.1}.

In fact, the recursion mentioned in the proof of Theorem~\ref{thm:lemma}, together with the results in \cite{DadderioVandenwyngaerd}, provides a complete solution of Problem~8.1 in \cite{HaglundRemmelWilson}.

\section{Partially labelled Dyck paths}

Other objects involved in this theory are the partially labelled Dyck paths defined in \cite{HaglundRemmelWilson}*{Section 7.2}. Let us consider the partially labelled Dyck paths of size $m+n+1$ with $n+1$ labels, $m$ blanks (that must be valleys), and $k$ decorated rises. Let $\PLDP(m,n)^{\star k}$ be this set. We are especially interested in the case $k = n$, which means that all the rises are decorated and all the valleys are blanks.

In \cite{HaglundRemmelWilson}, the authors define two statistics $\dinv$ and $\underline{\area}$ for these objects. We can actually define a third statistic $\pmaj$ essentially as the usual one defined in \cite{LoehrRemmelpmaj,Loehrpmaj}. We proceed as follows. First of all, add a $0$ label to every unlabelled vertical step. Then, we pursue this algorithm. At step $0$, we start with an empty multiset $S$. Next, at step $i$, we add to $S$ all the labels appearing in the column $i$ (if any), and write down the greatest letter in $S$ lesser or equal than the one written at step $i-1$ (if any), or the greatest among all the letters in $S$ if we can't; now we remove from $S$ the letter we just wrote, and go on until we reach the last column. Finally, we reverse the word we just wrote and define the $\pmaj$ as the major index of this word, i.e. the sum of the positions of the descents in this reversed word.

\begin{conjecture}
	\label{cong:haglund}
	\[ \Delta_{h_m} \Delta'_{e_{n-k}} e_{n+1} = \sum_{P \in \PLDP(m,n)^{\star k}} q^{\underline{\area}(P)} t^{\pmaj(P)} x^P, \]
	where $x^P$ is the product of the $x_i$ as $i$ runs over the nonzero labels of $P$.
\end{conjecture}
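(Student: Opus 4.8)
The plan is to prove Conjecture~\ref{cong:haglund} by the recursion-matching strategy already used for Theorem~\ref{thm:lemma}, transported from reduced polyominoes to partially labelled Dyck paths with the $\pmaj$ statistic. First I would reduce the symmetric-function identity to a family of scalar identities by pairing both sides with $h_\mu$ for every partition $\mu$ with $n+1$ parts: on the right this extracts $\sum q^{\underline{\area}(P)}t^{\pmaj(P)}$ over the $P\in\PLDP(m,n)^{\star k}$ whose multiset of nonzero labels is $\mu$, so that, as a byproduct, one must also verify that the right-hand side is genuinely symmetric in the $x_i$ (which is not clear a priori); on the left it gives $\<\Delta_{h_m}\Delta'_{e_{n-k}}e_{n+1},h_\mu\>$. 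Two natural anchor cases frame the induction: $k=n$, where all rises are decorated and all valleys blank and $\Delta'_{e_0}e_{n+1}=e_{n+1}$, so that through an extension of the polyomino$\leftrightarrow$PLDP bijection discussed at the end of the paper the claim should reduce to the reduced-polyomino generating functions already computed here; and $m=0$, $k=0$, which is the $(\area,\pmaj)$ statement for $\nabla e_{n+1}$ familiar from the parking-function literature.

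On the symmetric-function side I would expand $\<\Delta_{h_m}\Delta'_{e_{n-k}}e_{n+1},h_\mu\>$ in terms of the polynomials $F_{n,k}^{(d,\ell)}$ of \cite[\S4]{DadderioVandenwyngaerd}, using their expansion formulas and the $h_\mu$-pairings computed there, and feed the result into \cite[Theorem~4.6]{DadderioVandenwyngaerd} to obtain a recursion. This is exactly the mechanism behind Theorem~\ref{thm:lemma}; the only new input is the $h_\mu$-graded (rather than merely $s_{k+1,1^{n-k}}$-graded) version of that computation, after the change of variables that brings the recursion to the shape of Theorem~\ref{th:bouncerecursion}.

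The combinatorial side is where the real work lies. I would aim to produce a matching recursion for $\sum_{P:\ \mathrm{content}=\mu}q^{\underline{\area}(P)}t^{\pmaj(P)}$ via the first-return decomposition of the underlying Dyck path, peeling off the portion up to the first return to the diagonal. The effect of this on $\underline{\area}$ and on the distribution of labels, blanks and decorated rises is routine; the subtle point is the effect on $\pmaj$. Since $\pmaj$ is computed by a single left-to-right sweep maintaining a multiset $S$ together with the last symbol written, cutting the path does not split the sweep cleanly, and one has to carry the state $(S,\text{last symbol})$ across the cut and sum over the possible intermediate states, in the spirit of the $\pmaj$/bounce recursions of Loehr and Remmel \cite{LoehrRemmelpmaj}. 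Fitting the decorated rises (whose sweep outputs are deleted from $\underline{\area}$) and the forced label $0$ on blank steps into this bookkeeping is the delicate part; once the two recursions are seen to coincide, the conjecture follows by induction from the anchor cases.

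The main obstacle is precisely this $\pmaj$ recursion: because $\pmaj$ is non-local, matching its generating function to a symmetric-function recursion is considerably harder than for $\area$, $\bounce$ or $\dinv$, and is the analogue of the most delicate step in the Loehr--Remmel analysis, here further burdened by partial labels and decorated rises. A cleaner but only conditional alternative would be to construct a bijection on $\PLDP(m,n)^{\star k}$, in the spirit of the $\zeta$ map of Theorem~\ref{thm:zeta}, sending $(\dinv,\underline{\area})$ to $(\underline{\area},\pmaj)$ and fixing $x^P$; this would reduce Conjecture~\ref{cong:haglund} to the $(\dinv,\underline{\area})$ form of the generalized Delta conjecture of \cite{HaglundRemmelWilson}, which, being itself open, would not yield an unconditional proof.
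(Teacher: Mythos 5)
There is a fundamental mismatch here: the statement you are trying to prove is stated in the paper as a \emph{conjecture}, and the paper offers no proof of it --- only supporting evidence (computer verification, the observation that for $m=0$ the predicted pairing with $e_jh_{n-j+1}$ matches the second-bounce interpretation of \cite[Theorem~6.2]{DadderioVandenwyngaerd}, and, for $k=n$, a bijection that merely \emph{transports} the statement to labelled reduced polyominoes rather than proving it). Your text is accordingly not a proof but a research plan, and you say so yourself: the central step --- establishing a recursion for $\sum_P q^{\underline{\area}(P)}t^{\pmaj(P)}x^P$ under a first-return decomposition and matching it to a recursion for $\Delta_{h_m}\Delta'_{e_{n-k}}e_{n+1}$ --- is identified as ``the main obstacle'' and left entirely unresolved. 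The same is true of the symmetry of the right-hand side in the $x_i$, which you correctly flag as needing verification but do not verify. A plan whose decisive steps are all deferred cannot be accepted as a proof of an open conjecture.

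Two more concrete problems with the plan as written. First, your anchor cases are weaker than you suggest: the identities the paper actually proves (e.g.\ $\RP_{q,t}(m,n)^{\star k}=\<\Delta_{h_m}e_{n+1},s_{k+1,1^{n-k}}\>$) concern the $(\dinv,\underline{\area})$ enumerators of \emph{unlabelled} decorated polyominoes, i.e.\ a single scalar product, whereas Conjecture~\ref{cong:haglund} asserts a full symmetric-function identity with the bistatistic $(\underline{\area},\pmaj)$ and a monomial weight $x^P$; the $k=n$ bijection to labelled polyominoes does not close this gap, it only rephrases it. Second, the recursion machinery of \cite[Theorem~4.6]{DadderioVandenwyngaerd} that powers Theorem~\ref{thm:lemma} is set up for the specific scalar products appearing there, and you give no argument that an ``$h_\mu$-graded version'' of that computation exists, let alone that it lines up with a $\pmaj$ recursion whose state $(S,\text{last symbol})$ you would need to carry across the cut. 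Absent these, the proposal does not establish the statement, which, consistent with the paper, remains open.
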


Other than computer verification, in support of our conjecture, in the case $m=0$, i.e. in the Delta conjecture case, the predicted scalar product with $e_jh_{n-j+1}$ is exactly the combinatorial interpretation with the \emph{second bounce} (denoted $\bounce '$) of \cite[Theorem~6.2]{DadderioVandenwyngaerd}.

For $k = n$, there is a bijection from these objects to \emph{labelled reduced polyominoes}, where the labelling (with positive integers) is done by putting a label below the first column, then the other labels on vertical red steps such that columns are strictly increasing (i.e. we are adding an artificial vertical red step at the beginning).


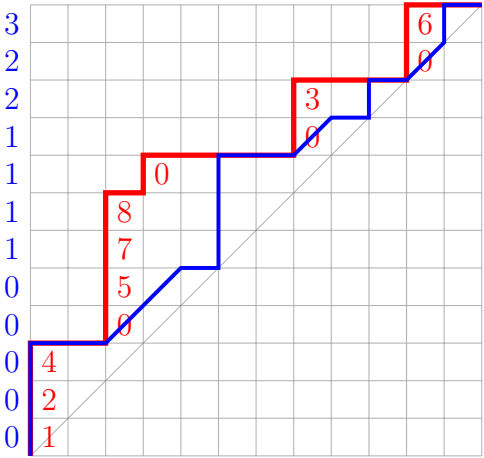
\begin{figure}[!h]
	\begin{center}
		\begin{tikzpicture}[scale=0.5]
		\draw[step=1.0, gray, opacity=0.6, thin] (0,0) grid (12,12);
		
		\draw[gray, opacity=0.6, thin] (0,0) -- (12,12);
		
		\draw[red, line width=2pt] (0,0) -- (0,1) -- (0,2) -- (0,3) -- (1,3) -- (2,3) -- (2,4) -- (2,5) -- (2,6) -- (2,7) -- (3,7) -- (3,8) -- (4,8) -- (5,8) -- (6,8) -- (7,8) -- (7,9) -- (7,10) -- (8,10) -- (9,10) -- (10,10) -- (10,11) -- (10,12) -- (11,12) -- (12,12);
		
		\node[red] at (0.5,0.5) {$1$};
		\node[red] at (0.5,1.5) {$2$};
		\node[red] at (0.5,2.5) {$4$};
		\node[red] at (2.5,3.5) {$0$};
		\node[red] at (2.5,4.5) {$5$};
		\node[red] at (2.5,5.5) {$7$};
		\node[red] at (2.5,6.5) {$8$};
		\node[red] at (3.5,7.5) {$0$};
		\node[red] at (7.5,8.5) {$0$};
		\node[red] at (7.5,9.5) {$3$};
		\node[red] at (10.5,10.5) {$0$};
		\node[red] at (10.5,11.5) {$6$};
		
		\draw[blue, line width=1.5pt] (0,0) -- (0,3) -- (1,3) -- (2,3) -- (3,4) -- (4,5) -- (5,5) -- (5,8) -- (6,8) -- (7,8) -- (8,9) -- (9,9) -- (9,10) -- (10,10) -- (11,11) -- (11,12) -- (12,12);
		
		\node[blue, left] at (0.0,0.5) {$0$};
		\node[blue, left] at (0.0,1.5) {$0$};
		\node[blue, left] at (0.0,2.5) {$0$};
		\node[blue, left] at (0.0,3.5) {$0$};
		\node[blue, left] at (0.0,4.5) {$0$};
		\node[blue, left] at (0.0,5.5) {$1$};
		\node[blue, left] at (0.0,6.5) {$1$};
		\node[blue, left] at (0.0,7.5) {$1$};
		\node[blue, left] at (0.0,8.5) {$1$};
		\node[blue, left] at (0.0,9.5) {$2$};
		\node[blue, left] at (0.0,10.5) {$2$};
		\node[blue, left] at (0.0,11.5) {$3$};
		\end{tikzpicture}
	\end{center}
	
	\caption{A partially labelled Dyck path with $n=7$ and $m=4$. The bounce path is shown (its labels are blue): it goes diagonally in columns containing a $0$ label.}
	\label{fig:pldp}
\end{figure}

\begin{theorem}
	There is a bijection between $\PLDP(m,n)^{\star n}$ and labelled reduced polyominoes of size $m \times n$.
\end{theorem}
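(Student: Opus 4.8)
The plan is to describe the bijection and its inverse explicitly. The first remark is that for $D \in \PLDP(m,n)^{\star n}$ neither the decorations nor the positions of the blanks carry any information: every rise is decorated and every valley is a blank, so $D$ is the same datum as its underlying Dyck path $\widetilde D$ — which has size $m+n+1$ and is forced to have exactly $m$ valleys — together with the word $w = w_1 w_2 \cdots w_{n+1}$ of positive labels read off the $n+1$ non-valley north steps of $\widetilde D$, scanning its maximal vertical runs from left to right and, within a run, from bottom to top.

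Next I would set up the underlying unlabelled bijection. Writing $\widetilde D$ as an alternating concatenation of maximal north runs and maximal east runs, of lengths $\alpha_1, \beta_1, \alpha_2, \beta_2, \dots, \alpha_{m+1}, \beta_{m+1}$ (all positive, with $\sum_i \alpha_i = \sum_i \beta_i = m+n+1$), being a Dyck path is exactly the condition $\sum_{i \le j} \beta_i \le \sum_{i \le j} \alpha_i$ for all $j$, with equality at $j = m+1$. I let the red path of a polyomino have column heights $\alpha_1 - 1, \dots, \alpha_{m+1} - 1$ from left to right, and its green path have column heights $\beta_1 - 1, \dots, \beta_{m+1} - 1$: these are nonnegative, they sum to $n$, and the displayed inequality is precisely the condition that the green path stay weakly below the red path, so we obtain a well-defined element of $\RP(m,n)$. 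Since a lattice path with a prescribed number of east steps is determined by its sequence of column heights and this construction is visibly reversible, it is a bijection between the Dyck paths of size $m+n+1$ with exactly $m$ valleys and $\RP(m,n)$.

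It remains to transport the labels. Prepending the artificial vertical red step to the first column turns the red column heights into $\alpha_1, \alpha_2 - 1, \dots, \alpha_{m+1} - 1$, and one checks that for every $j$ the $j$-th maximal north run of $\widetilde D$ carries a label if and only if the $j$-th column of the augmented red path is nonempty, and that in that case both carry the same number of labels: $\alpha_1$ of them when $j = 1$ — the artificial step accounting for the fact that the whole first run of $\widetilde D$ is non-valley — and $\alpha_j - 1$ of them when $j \ge 2$, the bottom (blank, valley) step of the $j$-th run of $\widetilde D$ being dropped. The length-one blank runs of $\widetilde D$ and the height-zero columns of the polyomino carry no label and are skipped, in the same positions. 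Inserting $w_1, \dots, w_{n+1}$ into the label slots of the augmented red path in this same left-to-right, bottom-to-top order, the requirement that $w$ be strictly increasing along the columns of $D$ becomes verbatim the requirement that the resulting labelling be strictly increasing along the columns of the polyomino; hence we land in the set of labelled reduced polyominoes of size $m \times n$. Running the construction backwards — forget the labels, pass to the two column-height compositions, rebuild $\widetilde D$, decorate all its rises and blank all its valleys, and reinsert the labels along the non-valley north steps in order — gives the inverse.

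The step that has to be carried out carefully is exactly this matching of maximal vertical runs: one must check that the length-one blank runs of $\widetilde D$ correspond to the height-zero red columns of the polyomino so that the label-bearing runs get matched in the correct order, and one should dispose of the degenerate cases ($m = 0$ or $n = 0$, and the case in which the red path begins with an east step, so that its first column consists of the artificial step alone). Once the run structures are identified in this way, it is moreover routine to check that the bijection carries the pair $(\underline{\area}, \pmaj)$ on $\PLDP(m,n)^{\star n}$ to the corresponding pair on labelled reduced polyominoes, which is what makes it useful.
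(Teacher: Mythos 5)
Your construction is correct and yields literally the same bijection as the paper's: the paper encodes the Dyck path by its coloured area word and reads off the red path from the red letters and the green path by dropping each red horizontal step down by the value of the adjacent green letter, which one checks is exactly your prescription that the red and green column heights be $\alpha_j-1$ and $\beta_j-1$ (the value of the green letter at the start of the $(j+1)$-st north run is $\sum_{i\le j}\alpha_i-\sum_{i\le j}\beta_i$, so subtracting it from the red step's height $\sum_{i\le j}(\alpha_i-1)$ gives $\sum_{i\le j}(\beta_i-1)$). Your run-length phrasing has the minor advantage of making the well-definedness (green weakly below red) an immediate consequence of the Dyck partial-sum inequality, but the map, the treatment of the artificial first red step, and the transport of labels all coincide with the paper's proof.
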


\begin{proof}
	Let $P \in \PLDP(m,n)^{\star n}$. Write its area word (the usual one for Dyck paths), colouring in green the numbers corresponding to rows containing a valley, and in red the other ones (for example, the Dyck path in Figure~\ref{fig:pldp} has area word ${\color{red} 0} {\color{red} 1} {\color{red} 2} {\color{green} 1} {\color{red} 2} {\color{red} 3} {\color{red} 4} {\color{green} 4} {\color{green} 1} {\color{red} 2} {\color{green} 0} {\color{red} 1}$).
	
	Now, draw the red path of the polyomino as follows: starting from the second letter of the area word, draw a vertical step if the letter is red, and a horizontal step if it is green. Whenever you draw a vertical step, attach the label in the corresponding row (there must be one, since vertical steps correspond to rows that do not contain valleys).
	
	Next, draw the green path as follows. Draw a horizontal green step $x$ rows below each horizontal red step, where $x$ is the value of the green letter corresponding to that horizontal red step. Then, connect them with vertical steps to get a lattice path from $(0,0)$ to $(m,n)$. See Figure~\ref{fig:bijection} for an example.
%
\end{proof}

\begin{figure}[!h]
	\begin{center}
		\begin{tikzpicture}[scale=0.5]
			\draw[step=1.0, gray, opacity=0.6,thin] (0,0) grid (4,7);
			
			\filldraw[yellow, opacity=0.3] (0,0) -- (0,1) -- (1,1) -- (2,1) -- (2,2) -- (2,3) -- (2,4) -- (3,4) -- (3,5) -- (3,6) -- (4,6) -- (4,7) -- (4,6) -- (3,6) -- (3,5) -- (2,5) --  (1,5) -- (1,4) -- (1,3) -- (1,2) -- (0,2) -- (0,1) -- (0,0);
			
			\draw[green, line width=2.5pt] (0,0) -- (0,1) -- (1,1) -- (2,1) -- (2,2) -- (2,3) -- (2,4) -- (3,4) -- (3,5) -- (3,6) -- (4,6) -- (4,7);
			
			\draw[red, line width=2.5pt] (0,0) -- (0,1) -- (0,2) -- (1,2) -- (1,3) -- (1,4) -- (1,5) -- (2,5) -- (3,5) -- (3,6) -- (4,6) -- (4,7);
					
			\node[red, left] at (0.0,-0.5) {$0$};
			\node[red, left] at (0.0,0.5) {$1$};
			\node[red, left] at (0.0,1.5) {$2$};
			\node[green, left] at (1.0,1.5) {$1$};
			\node[red, left] at (1.0,2.5) {$2$};
			\node[red, left] at (1.0,3.5) {$3$};
			\node[red, left] at (1.0,4.5) {$4$};
			\node[green, left] at (2.0,4.5) {$4$};
			\node[green, left] at (3.0,4.5) {$1$};
			\node[red, left] at (3.0,5.5) {$2$};
			\node[green, left] at (4.0,5.5) {$0$};
			\node[red, left] at (4.0,6.5) {$1$};
		\end{tikzpicture}
		\begin{tikzpicture}[scale=0.5]
			\draw[step=1.0, gray, opacity=0.6,thin] (0,0) grid (4,7);
			
			\filldraw[yellow, opacity=0.3] (0,0) -- (0,1) -- (1,1) -- (2,1) -- (2,2) -- (2,3) -- (2,4) -- (3,4) -- (3,5) -- (3,6) -- (4,6) -- (4,7) -- (4,6) -- (3,6) -- (3,5) -- (2,5) --  (1,5) -- (1,4) -- (1,3) -- (1,2) -- (0,2) -- (0,1) -- (0,0);
			
			\draw[green, line width=2.5pt] (0,0) -- (0,1) -- (1,1) -- (2,1) -- (2,2) -- (2,3) -- (2,4) -- (3,4) -- (3,5) -- (3,6) -- (4,6) -- (4,7);
			
			\draw[red, line width=2.5pt] (0,0) -- (0,1) -- (0,2) -- (1,2) -- (1,3) -- (1,4) -- (1,5) -- (2,5) -- (3,5) -- (3,6) -- (4,6) -- (4,7);
						
			\draw[blue, line width=1.5pt] (0,0) -- (0,1) -- (0,2) -- (1,2) -- (2,2) -- (2,3) -- (2,4) -- (2,5) -- (3,5) -- (3,6) -- (4,6) -- (4,7);
			
			\node[white, left] at (-4.0,-0.5) {$0$};
			\node[blue, right] at (0.0,0.5) {$0$};
			\node[blue, right] at (0.0,1.5) {$0$};
			\node[blue, above] at (0.5,2.0) {$\bar{0}$};
			\node[blue, above] at (1.5,2.0) {$\bar{0}$};
			\node[blue, right] at (2.0,2.5) {$1$};
			\node[blue, right] at (2.0,3.5) {$1$};
			\node[blue, right] at (2.0,4.5) {$1$};
			\node[blue, above] at (2.5,5.0) {$\bar{1}$};
			\node[blue, right] at (3.0,5.5) {$2$};
			\node[blue, above] at (3.5,6.0) {$\bar{2}$};
			\node[blue, right] at (4.0,6.5) {$3$};
		\end{tikzpicture}
	\end{center}

	\caption{The image of the partially labelled Dyck path in Figure~\ref{fig:pldp} with the bijection. The statistics $\area$ and $\bounce$ are preserved.}
	\label{fig:bijection}
\end{figure}
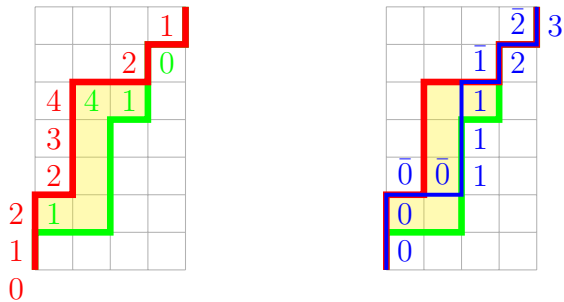

The bijection we just defined maps $\underline{\area}$ to the area of the polyomino (by construction). It also defines a $\pmaj$ statistic on labelled polyominoes (which is easy to read on the image) that collapses to the $\bounce$ statistic if the labels are ordered from $1$ to $n+1$ bottom to top; this gives another combinatorial interpretation for the Conjecture~\ref{cong:haglund} in the case $k=n$. 


Finally, it defines a new $\dinv$ statistic on polyominoes, answering the question in \cite[Equation (8.14)]{AvalBergeronGarsia}.

If viewed in the other direction, this map gives a way to define a $\bounce$ statistic on partially labelled Dyck paths, together with a way to draw a bounce path. An example is shown in Figure~\ref{fig:pldp}.

\bibliographystyle{plain}
\bibliography{Bibliography}

\end{document}